\theoremstyle{thmstyleone}%
\newtheorem{theorem}{Theorem}
\newtheorem{corollary}[theorem]{Corollary}
\newtheorem{proposition}[theorem]{Proposition}
\newtheorem{lemma}[theorem]{Lemma}%
\newcommand\sauri[1]{\textcolor{black}{#1}}
\theoremstyle{thmstylethree}%
\newtheorem{remark}{Remark}%
\theoremstyle{thmstylethree}%
\newtheorem*{assumption}{Assumption} 
\begin{document}

\title[Local Limit Theorems for Energy Fluxes of Infinite Divisible Random Fields]{Local Limit Theorems for Energy Fluxes of Infinite Divisible Random Fields}


\author[1,2]{\fnm{Jos\'e Ulises} \sur{M\'arquez-Urbina}}\email{ulises@cimat.mx}

\author[3]{\fnm{Orimar} \sur{Sauri}}\email{osauri@math.aau.dk}

\affil[1]{\orgdiv{Centro de Investigaci\'on en Matem\'aticas }, \orgname{Unidad Monterrey}, \orgaddress{\street{Av. Alianza Centro No. 502, PIIT}, \city{Apodaca}, \postcode{66628}, \state{N.L.}, \country{Mexico}}}

\affil[2]{\orgname{Consejo Nacional de Ciencia y Tecnolog\'ia}, \orgaddress{\street{Av. Insurgentes Sur 1582}, \city{CDMX}, \postcode{03940}, \country{Mexico}}}

\affil[3]{\orgdiv{Department of Mathematical Sciences}, \orgname{Aalborg University}, \orgaddress{\street{ Skjernvej 4, A, 5-202}, \city{Aalborg \O{}.}, \postcode{9220}, \country{Denmark}}}


\abstract{We study the local asymptotic behavior of divergence-like functionals of a family of $d$-dimensional Infinitely Divisible Random Fields. Specifically, we derive limit theorems of surface integrals over Lipschitz manifolds for this class of fields when the region of integration shrinks to a single point.  We show that in most cases, convergence stably in distribution holds after a proper normalization. Furthermore, the limit random fields can be described in terms of stochastic integrals with respect to a L\'evy basis. We additionally discuss how our results can be used to measure the kinetic energy of a possibly turbulent  flow.}

\keywords{Infinitely Divisible Random Fields, Energy Flux, Limit Theorems for Random Fields, Stokes Theorem, Tangent Fields.}


\pacs[MSC Classification]{60F05, 60G60, 60G51, 60G52, 26B20, 37E35}
\pacs[Acknowledgment]{ We dedicate this article to Ole Barndorff-Nielsen's memory (1935-2022). Thanks for all your support.}
\maketitle\
\section{Introduction\label{intro}}
\subsection{ Overview}Kinetic energy is the energy associated with a body due to its motion. Formally, the kinetic energy of a body is defined as the work needed to accelerate it from rest to its stated velocity. In this work, we are interested in the local behaviour of the kinetic energy of a turbulent flow.  In physics, turbulence refers to the chaotic and unpredictable motions found in some fluids which is typically characterised by abrupt changes in pressure and flow velocity. In a turbulent flow, the kinetic energy flux measures the amount of energy being injected or extracted from the fluid enclosed in a region. Thus, it is a proxy of energy dissipation in a turbulent flow. 

Understanding turbulence is considered one of the last open problems of classical physics. As part of fluid dynamics, turbulence can be studied via the Navier-Stokes equations. However, this approach has proven to be very challenging; therefore, numerous efforts have been made to develop phenomenological models that reproduce some of the key stylized features of turbulent fluids. Such models aim to produce tools that can be employed in practical situations or allow the understanding of some turbulence elements. Ambit processes stand out among these phenomenological models due to their flexibility and theoretical properties. These stochastic processes were introduced as models for turbulent velocity flows in \cite{BNSch03}; they provide a robust framework to describe spatio-temporal phenomena and have been applied in different contexts like finance (\cite	{BNBEnthVeraat14,BNBEnthVeraat13}), tumor growth (\cite{BNSch07,BNJenJonsdSchm07}), and turbulence (\cite{BNSch07,BNSch03,HedSch13,HedSch14}). In broad terms, ambit processes are a general class of spatio-temporal stochastic processes defined as stochastic integrals with respect to an independently scattered and infinitely divisible random measure. We refer the reader to \cite{BNBEnthVeraat18} for more details on ambit stochastics.

This article studies local limits of general energy fluxes over smooth manifolds for a subclass of vector-valued ambit fields. Besides the purely mathematical interest, studying (kinetic) energy fluxes of random fields could shed some light on the conditions the chosen model requires to fulfill in order to reproduce key features present in turbulent flows.

\subsection{ Related work}From the perspective of modeling turbulence, there is some literature related to the present work. \cite{BNSch03} introduced the class of ambit processes and proposed employing them to model the energy dissipation of a turbulent flow. \cite{BNSch07} discussed for the first time the use of ambit processes to model a turbulent velocity field; in that article, the authors also discussed some relevant questions required to aim for a complete theory of ambit processes for turbulence. \cite{HedSch14} proposed specific ambit random fields capable of reproducing a given covariance structure. In particular, it was shown that in the isotropic and incompressible case, the kernel is expressible in terms of the energy spectrum; the models developed are applied to atmospheric boundary layer turbulence. \cite{Sch20} discusses the use of ambit random fields for the description of 2-dimensional turbulence. In that work, the author discusses the construction of 2-dimensional homogeneous and isotropic ambit fields that are divergence-free but not invariant under the parity operation. 

On the other hand, to the best of our knowledge, the questions addressed in the present work have only been previously considered in two manuscripts. The first one is the work in \cite{Sch20} discussed before. In this set-up, one can describe energy fluxes via classical vector calculus. In a more broad framework,  \cite{Sauri20} studies the flux and circulation of a 2-dimensional subclass of ambit random fields, determining local limits for those functionals under proper normalization. Namely, it is shown that in most cases they converge stably in distribution towards stationary random fields given in terms of line integrals of a Levy basis over the boundary of the original underlying ambit set. Other mathematical works that have considered similar functionals to the one studied in this article can be found in the theory of statistical mechanics and microstructures in continuum mechanics (e.g., \cite{presutti2008scaling}); in that area, for example, the macroscopic excess free energy is defined as a surface integral. Although some limiting behavior is addressed in this theory, they do not study the limits of functionals of random fields defined by integrals with respect to the Haussdorff measure as we do in the present work.

\subsection{  Main contributions of this article} We study the asymptotic behavior in divergence-like limits for fluxes of infinitely divisible random fields of the form
\begin{align*}
	X(p)=\int_{A+p} F(p,q) L(dq) \quad p\in\mathbb{R}^d,
\end{align*}
\sauri{where $L$ is a homogeneous Lévy basis (see the next section for more details), $F$ is continuously differentiable} and $A$ compact. More precisely, we determine conditions for the convergence, as $r\downarrow 0$, of  normalised functionals of the form 
\begin{align*}
	\mathscr{E}_{r}=\int_{S_r} \phi (X(y))\cdot u_{S_r}\mathcal{H}^{d-1}(dy), 
\end{align*}
where $S_r=rM+p_0$ is the boundary of a region $V_r=r\mathfrak{D}+p_0$, $\phi$ is a function with polynomial growth, $u_{S_r}$ is the outward unit normal to $S_r$, and $\mathcal{H}^{d-1}$ denotes the $(d-1)$-Hausdorff measure in $\mathbb{R}^d$. It turns out that the rate of convergence of $\mathscr{E}_{r}$ strongly depends on whether $L$ is of finite variation or not. In the latter situation, our central assumption is that the law of the ``small jumps'' of $L$ belongs to the domain of attraction of an $\alpha$-stable distribution. In the  finite variation case, we further show that the kinetic energy flux converges in probability under the classical normalization $\lvert S_r\lvert $.
In both situations, the limit processes can be expressed in terms of stochastic integrals with respect to a L\'evy basis over regions uniquely determined by the geometry of $A$. Finally, by considering  $(\mathscr{E}_{tr})_{t\geq0}$ as a sequence of continuous-time stochastic process, we show that the limiting process of such sequence is not only self-similar but also absolutely continuous, regardless of whether $L$ is of finite variation or not.

The organization of the paper is as follows. In Section 1, we introduce the basic probabilistic and geometrical concepts and results that will be used in our work. Section 3 describes our main results regarding the asymptotic behaviour of energy fluxes and related functionals. Due to the technical nature of our proofs, most of them will be presented in Section 4.

\section{Preliminaries\label{sec:Preliminaries-and-basic}}

This part is devoted to introducing the basic notations as well as to recall several basic results and concepts that will be used through
this paper.

\subsection{Stable convergence and L\'evy bases\label{Preliminariesstableconv}}

In this work, the inner product and the norm of vectors $x,y\in\mathbb{R}^{d}$
will be represented by $x\cdot y$ and $\| x\| $,
respectively. Throughout the following sections $\left(\Omega,\mathcal{F},\mathbb{P}\right)$
will denote a complete probability space. For a sequence of random vectors $(\xi_{n})_{n\geq1}$ defined on $\left(\Omega,\mathcal{F},\mathbb{P}\right)$, we write $\xi_{n}=\mathrm{o}_{\mathbb{P}}(1)$ whenever $\xi_{n}\overset{\mathbb{P}}{\rightarrow}0$, as $n\rightarrow\infty$. Furthermore, given a sub-$\sigma$-field $\mathcal{G}\subseteq\mathcal{F}$ and a random vector $\xi$ (defined possibly on an extension of $\left(\Omega,\mathcal{F},\mathbb{P}\right)$), we say that $\xi_{n}$ converges $\mathcal{G}$-stably in distribution towards $\xi$, and write $\xi_{n}\overset{\mathcal{G}\text{-}d}{\longrightarrow}\xi$, if for any $\mathcal{G}$-measurable random variable (r.v. from now on) $\zeta$, $(\xi_{n},\zeta)\rightarrow(\xi,\zeta)$ weakly as $n\rightarrow\infty$. Within the preceding framework, if $(X_{n}(t))_{t\in T,n\in\mathbb{N}}$ is a sequence of random fields defined on $\left(\Omega,\mathcal{F},\mathbb{P}\right)$,
we will write $X_{n}\overset{\mathcal{G}\text{-}fd}{\longrightarrow}X$ if the finite-dimensional distributions (f.d.d. for short) of $X_{n}$
converge $\mathcal{G}$-stably toward the f.d.d. of $X$. For a concise exposition of stable convergence, see \cite{HauslerLuschgy15} and references therein.

Let $\mu$ be a measure on $\mathcal{B}(\mathbb{R}^{d})$, the Borel sets on $\mathbb{R}^{d}$, and define $\mathcal{B}_{b}^{\mu}(\mathbb{R}^{d}):=\{A\in\mathcal{B}(\mathbb{R}^{d}):\mu(A)<\infty\}$. The $\mathbb{R}^{m}$-valued random field $L=\{L\left(A\right):A\in\mathcal{B}_{b}^{\mu}(\mathbb{R}^{d})\}$ will be called a \textit{separable L\'evy basis} with \textit{control measure} $\mu$ if it satisfies the following:
\begin{enumerate}
	\item For every $A\in\mathcal{B}_{b}^{\mu}(\mathbb{R}^{d})$, $L(A)$ is infinitely divisible (ID for short). 
	\item $L(A)$ and $L(B)$ are independent whenever $A,B\in\mathcal{B}_{b}^{\mu}(\mathbb{R}^{d})$ and $A\cap B=\emptyset$. 
	\item Given a disjoint sequence $\left\lbrace A_{n}\right\rbrace _{n\geq1}\subseteq\mathcal{B}_{b}^{\mu}(\mathbb{R}^{d})$ such that $\cup_{n=1}^{\infty}A_{n}\in\mathcal{B}_{b}^{\mu}(\mathbb{R}^{d})$,
	it holds almost surely (a.s. for short) 
	\[      
	L(\cup_{n=1}^{\infty}A_{n})=\sum_{n\geq1}L(A_{n}).
	\]
	\item For every $A\in\mathcal{B}_{b}^{\mu}(\mathbb{R}^{d})$ and $z\in\mathbb{R}^{m}$, we have that 
	\[
	\mathbb{E}(\exp(\mathbf{i}z\cdot L(A)))=\exp(\mu(A)\psi(z)),
	\]
	where
	\begin{equation}
		\psi(z):=\mathbf{i}\gamma\cdot z-\frac{1}{2}z\cdot\Sigma z+\int_{\mathbb{R}^{m}\backslash\{0\}}(e^{\mathbf{i}z\cdot x}-1-\mathbf{i}z\cdot x\mathbf{1}_{\| x\| \leq1})\nu(\mathrm{d}x), \label{chexpdef}
	\end{equation}
	with $\gamma\in\mathbb{R}^{m},$ $\Sigma$ a $m\times m$ positive
	definite matrix and $\nu$ a L\'evy measure, i.e. $\nu(\{0\})=0$
	and $\int_{\mathbb{R}^{m}\backslash\{0\}}(1\land\| x\| ^{2})\nu(\mathrm{d}x)<\infty$.
\end{enumerate}
When $\mu=\mathrm{Leb}$, in which $\mathrm{Leb}$ represents the Lebesgue measure on $\mathbb{R}^{d}$, $L$ is called \textit{homogeneous}. The ID random vector associated with the characteristic triplet $\left(\gamma,\Sigma,\nu\right)$ is known as the \textit{L\'evy seed} of $L$, and it will be denoted by $L'$. As usual, $\left(\gamma,\Sigma,\nu\right)$ will be called the characteristic triplet of $L$ and $\psi$ its characteristic exponent.

Any non-zero L\'evy measure on $\mathbb{R}^{m}$ admits a \textit{polar decomposition} 
\begin{equation}
	\nu(B)=\int_{\mathbb{S}^{m-1}}\int_{0}^{\infty}\mathbf{1}_{B}(ru)\rho_{u}(\mathrm{d}r)\lambda(\mathrm{d}u),\label{eq:polardecomp}
\end{equation}
where $\mathbb{S}^{m-1}$ is the unitary sphere in $\mathbb{R}^{m}$, $\lambda$ is a finite measure on $\mathbb{S}^{m-1}$, and $\{\rho_{u}:u\in\mathbb{S}^{m-1}\}$ is a family of L\'evy measures on $(0,\infty)$ such that the mapping $u\mapsto\rho_{u}(B)$ is measurable for all $A\in\mathcal{B}((0,\infty))$.

Let $0<\alpha\leq2$ and $\lambda$ a finite measure on $\mathbb{S}^{m-1}$. A separable L\'evy basis is called \textit{strictly $\alpha$-stable} if its L\'evy seed is distributed according to a strictly $\alpha$-stable distribution with spectral measure $\lambda$; that is, $L'$ is centred Gaussian with covariance $\Sigma$ if $\alpha=2$, while for $0<\alpha<2$ the characteristic triplet of $L'$ has no Gaussian component ($\Sigma=0$), its L\'evy measure admits the polar decomposition 
\[
\nu(B)=\int_{\mathbb{S}^{m-1}}\int_{0}^{\infty}\mathbf{1}_{B}(ru)\frac{\mathrm{d}r}{r^{1+\alpha}}\lambda(\mathrm{d}u),
\]
and $\gamma=\frac{\int_{\mathbb{S}^{m-1}}u\lambda(\mathrm{d}u)}{(1-\alpha)}$ if $\alpha\neq1$, while if $\alpha=1$, $\gamma$ can be arbitrary but with the restriction that $\int_{\mathbb{S}^{m-1}}u\lambda(\mathrm{d}u)=0$. For $\alpha<2$, the characteristic exponent of a strictly $\alpha$-stable L\'evy basis can be written as
\begin{equation}
	\psi_{\alpha}(z):=
	\begin{cases}
		-\int_{\mathbb{S}^{m-1}}\mid z\cdot u\mid^{\alpha}\varphi_{\alpha}(z,u)\lambda(\mathrm{d}u) & \text{if }\alpha\neq1;\\
		-\int_{\mathbb{S}^{m-1}}\mid z\cdot u\mid\varphi_{\alpha}(z,u)\lambda(\mathrm{d}u)+\mathbf{i}\gamma z & \text{if }\alpha=1,
	\end{cases}
	\label{chfnctstrictlystabe}
\end{equation}
where 
\[ 
\varphi_{\alpha}(z,u)=
\begin{cases}
	1-\mathbf{i}\rho\mathrm{sign}(z\cdot u)\tan(\pi\beta/2) & \text{if }\alpha\neq1;\\
	1+\mathbf{i}\frac{2}{\pi}\mathrm{sign}(z\cdot u)\log(\mid z\cdot u\mid) & \text{if }\alpha=1.
\end{cases} 
\]
For the facts and concepts discussed in this section, we refer the reader to \cite{Sato99,SamorodTaqqu94,Rosinski90}.

\subsection{Geometrical preliminaries\label{PreliminariesJCHausdorff}}

For any $A\subseteq\mathbb{R}^{d},$ we let $-A=\{-x:x\in A\}$. Furthermore, we denote by $\mathring{A},\bar{A},\partial A,$ and $A^{c}$ the
interior, the closure, the boundary, and the complement of $A$, respectively, and we put $A^{*}=\bar{A^{c}}$. An open set $\mathfrak{D}\subseteq\mathbb{R}^{d}$ is said to be a \textit{Lipschitz domain} if its boundary can be locally described as the graph of a Lipschitz function defined on an open set of $\mathbb{R}^{d-1}$. We will say that a $(d-1)$-dimensional manifold $M\subseteq\mathbb{R}^{d}$ is Lipschitz if it is the boundary of a Lipschitz domain. For $s>0$, the $s$-dimensional Hausdorff measure will be represented by $\mathcal{H}^{s}$. Now, fix $A\subseteq\mathbb{R}^{d}$ a closed set. The \textit{metric projection} on $A$, $\Pi_{A}:\mathbb{R}^{d}\rightarrow A$, is the set function 
\[
\Pi_{A}(q):=\{p\in A:d_{A}(q)=\| p-q\| \},
\]
where $d_{A}(q):=\inf_{p\in A}\| p-q\| $. We set
\[
\mathrm{Unp}A:=\{q\in\mathbb{R}^{d}:\exists!p\in A\text{ s.t. }d_{A}(q)=\| p-q\| \}.
\]
The set $\mathrm{Unp}A$ is measurable and such that $\mathrm{Leb}(\mathbb{R}^{d}\backslash\mathrm{Unp}A)=0$. Under the previous notation, the \textit{reduced normal bundle} and the \textit{reach function} of $A$ are given, respectively, by 
\[
N(A)=\left\{ \left(\Pi_{A}(q),(q-\Pi_{A}(q))/\| q-\Pi_{A}(q)\| \right):q\in\mathrm{Unp}(A)\backslash A\right\} \subseteq\partial A\times\mathbb{S}^{d-1},
\]
and, $\delta_{A}(q,u):=0$ for $(q,u)\in N(A)^{c}$, while for $(q,u)\in N(A)$, 
\begin{equation}
	\delta_{A}(q,u):=\inf\{t\geq0:q+tu\in\mathrm{Unp}(A)^{c}\}.\label{deltaA}
\end{equation}
Following \cite{kiderlenRataj06}, we will say that a closed set $A\subseteq\mathbb{R}^{d}$ is \textit{gentle} if:
\begin{enumerate}
	\item For all bounded $B\in\mathcal{B}(\mathbb{R}^{d}),$ $\mathcal{H}^{d-1}(N(\partial A)\cap(B\times\mathbb{S}^{d-1}))<\infty$.
	\item For $\mathcal{H}^{d-1}$-almost all $x\in\partial A$, there are non-degenerate balls $B_{i}\subseteq A$ and $B_{o}\subseteq A^{*}$ containing $x$.
\end{enumerate}
Thus, if $A\subseteq\mathbb{R}^{d}$ is a gentle set, then:
\begin{enumerate}
	\item For $\mathcal{H}^{d-1}$-almost all $x\in\partial A$, there is $n=n_{A}(x)\in\mathbb{S}^{d-1}$ such that $(x,n)\in N(A)$ and $(x,-n)\in N(A^{*})$. Furthermore, the mapping $x\mapsto(x,n_{A}(x))$ is measurable.
	\item It holds that $\mathrm{Leb}(\partial A)=0$. If in addition $A$ is compact, we also have that $\mathcal{H}^{d-1}(\partial A)<\infty$.
	\item Any translation of $A$ is gentle since for all $p\in\mathbb{R}^{d}$,
	$d_{A+p}(q)=d_{A}(q-p)$ and
	\[
	\Pi_{A+p}(q)=\Pi_{A}(q-p)+p.
	\]
\end{enumerate}
For $r\geq0$, the $r$-\textit{parallel set} of $A$ is defined as
\begin{equation}
	A_{\oplus r}:=\{q\in\mathbb{R}^{d}:d_{A}(q)\leq r\}.\label{defparallelset}
\end{equation}
For a more detailed exposition of the geometrical terms introduced above, see \cite{Federer96} and \cite{HugGuntWel04}. 

\section{Limit Theorems for Energy Fluxes}
Through this section we fix $m,d\in\mathbb{N}$, with $d\geq2$, $p_{0}\in\mathbb{R}^{d}$, and a bounded Lipschitz domain $\mathfrak{D}\subseteq\mathbb{R}^{d}$. We further assume that $M$, the boundary of $\mathfrak{D}$, is a $(d-1)$-dimensional compact manifold. For the rest of this paper $\partial_{k}f$ will represent the partial derivative of a function with respect to its $k$th variable.

The \textit{energy flux} of a field $X$ through the region $\mathfrak{R}=r\mathfrak{D}+p_{0}$ is defined as
\begin{equation}
	\mathscr{E}_{r}\equiv \mathscr{E}_{r}(p_0)  =\int_{rM+p_{0}}\phi(X(y))\cdot u_{rM+p_{0}}(y)\mathcal{H}^{d-1}(\mathrm{d}y), \,\,\, r>0,\label{energifluxdef}
\end{equation}
where $u_{rM+p_{0}}$ denotes the unit outward vector of $rM+p_{0}$. The scalar quantity $\mathscr{E}_{r}$ represents the flux (integral) of the vector field $\phi(X(\cdot))$ across $\mathfrak{R}$. When $\phi(X(\cdot))$ is a vector field associated with a physical quantity, $\mathscr{E}_{r}$ provides a measure of the physical element passing through the boundary of $\mathfrak{R}$. For instance, when $X$ is the velocity vector field of a fluid and $\phi(x)=\| x\| ^{2}x$, the scalar quantity $\mathscr{E}_{r}$ measures the \textit{kinetic energy} flow rate over $\mathfrak{R}$. In such a case, $\mathscr{E}_{r}$ is referred to as the \textit{kinetic energy flux}. Another example within the previous framework is when $\phi(x)=x$. In this situation, $\mathscr{E}_{r}$ quantifies the amount of fluid passing through $\mathfrak{R}$.

If we assume that $0\in\mathfrak{D}$, then $p_{0}\in \mathfrak{R}$. Thus, the quantity $\mathscr{E}_{r}(p_0)/\lvert \mathfrak{R}\rvert$ converges to the divergence of the random field $\phi(X(\cdot))$ at $p_0$ as $r\rightarrow 0$. Therefore, when $\mathscr{E}_{r}$ represents the kinetic energy flux of a fluid, the normalized integral $\mathscr{E}_{r}(p_0)/\lvert \mathfrak{R}\rvert$ converges to the divergence of the kinetic energy at $p_0$, as $\lvert \mathfrak{R}\rvert\rightarrow 0$. For turbulent fluids, this quantity represents a proxy of the energy dissipation at $p_0$.

Note that by the Divergence Theorem \sauri{and the change of variables $z=p_0+ry$,}
\[
\frac{1}{r^{d-1}}\mathscr{E}_{r}=\int_{M}[\phi(X(p_{0}+ry))-\phi(X(p_{0}))]\cdot u_M(y)\mathcal{H}^{d-1}(\mathrm{d}y).
\]
This relation illustrates that $\mathscr{E}_{r}$ can be interpreted as the ``average" (on $M$) of the increments of $\phi(X(\cdot))$ projected onto the direction of  the outward vector of $M$. In consequence, the analysis of the local behaviour of energy fluxes reduces to study the asymptotic behaviour (as $r\downarrow0$) of the functional
\begin{equation}
	Z^{\phi,r}(t,f):=\int_{M}[\phi(X(p_{0}+rty))-\phi(X(p_{0}))]\cdot f(y)\mathcal{H}^{d-1}(\mathrm{d}y),\,\,t\geq0,\label{eq:mainfunctional}
\end{equation}
where $f$ is a measurable function. \sauri{In this paper, we concentrate on the case when $f\in L^{2}(\mathcal{H}^{d-1}\downharpoonright_{M})$. Furthermore, for the rest of this work we will focus on the situation in which  $X$ is the ID field given by}
\begin{equation}
	X(p):=\int_{A+p}F(p,q)L(\mathrm{d}q),\,\,\, p\in\mathbb{R}^{d}.\label{mainfield}
\end{equation}
 \sauri{We will always assume that $L$ is an $\mathbb{R}^{m}$-valued
homogeneous L\'evy basis with characteristic triplet $\left(\gamma,\Sigma,\nu\right)$,
$F:\mathbb{R}^{d}\times\mathbb{R}^{d}\rightarrow\mathbb{R}^{d\times m}$ is of class $C^1$, and  that $A\subseteq\mathbb{R}^{d}$ is a compact set}. Note that (\ref{mainfield}) means that $i$th element of $X(p)$
follows the dynamics
\begin{equation}
	X^{(i)}(p)=\sum_{j=1}^{m}\int_{A+p}F^{(i,j)}(p,q)L^{(j)}(\mathrm{d}q),\,\,\,i=1,\ldots,d.\label{eq:mainfieldentrybyentry}
\end{equation}
Since each $L^{(i)}$ is a homogeneous L\'evy basis, and $F$ is continuous, the integrals in (\ref{eq:mainfieldentrybyentry}) are well defined in the sense of \cite{RajputRosinski89}. 

\subsection{Main Results}\label{mainresults_subsec}

In this part, we present our main findings on the functionals introduced above. We start by verifying that $Z^{\phi,r}$ is well-defined for a large class of test functions. Recall that a function $\phi:\mathbb{R}^{d}\rightarrow\mathbb{R}^{d}$ is said to be of \textit{polynomial growth} of order $\beta\geq0$ if there is some $C>0$, such that 
\[
\| \phi(x)\| \leq C(1+\| x\| ^{\beta}), \, \, \forall \, x\in\mathbb{R}^{d}.
\]

\begin{proposition}\label{welldefZ}Let $X$ be defined as in (\ref{mainfield}).
	If $\phi:\mathbb{R}^{d}\rightarrow\mathbb{R}^{d}$ is measurable and of polynomial growth of order $\beta\geq0$, then  for all $t\geq0$ and for every $f\in\mathcal{L}^{2}(\mathcal{H}^{d-1}\downharpoonright_{M})$
	\[  
	\mathbb{P}(  \lvert Z^{\phi,r}(t,f) \lvert<\infty)=1. \]
\end{proposition}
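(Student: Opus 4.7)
The plan is to reduce $Z^{\phi,r}(t,f)$ to an almost-sure finiteness statement for an integral of $\|X\|^{2\beta}$ against $\mathcal{H}^{d-1}$ on $M$, and then control that integral via the L\'evy-It\^o decomposition of the driving L\'evy basis. Since $\mathfrak{D}$ is a bounded Lipschitz domain, the boundary $M$ is a compact gentle manifold with $\mathcal{H}^{d-1}(M)<\infty$, so Cauchy-Schwarz gives $\|f\|_{L^{1}(\mathcal{H}^{d-1}\downharpoonright_{M})}<\infty$. Writing $Z^{\phi,r}(t,f)$ as the difference of $\int_{M}\phi(X(p_{0}+rty))\cdot f(y)\,\mathcal{H}^{d-1}(\mathrm{d}y)$ and $\phi(X(p_{0}))\cdot\int_{M}f\,\mathrm{d}\mathcal{H}^{d-1}$, the second term is a.s.\ finite. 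For the first one I pass to a jointly measurable modification of $X$, which exists because continuity of $F$ together with the Rajput-Rosinski theory makes $p\mapsto X(p)$ continuous in probability. A further Cauchy-Schwarz combined with the polynomial-growth bound $\|\phi(x)\|^{2}\leq C(1+\|x\|^{2\beta})$ reduces the task to showing
\[
  \int_{M}\|X(p_{0}+rty)\|^{2\beta}\,\mathcal{H}^{d-1}(\mathrm{d}y)<\infty\quad\text{almost surely.}
\]

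I then decompose the homogeneous L\'evy basis as $L=L^{*}+L^{b}$, where $L^{b}$ is the compound Poisson basis with L\'evy measure $\nu\mathbf{1}_{\{\|x\|>1\}}$ and $L^{*}$ carries the Gaussian part, the drift and the small jumps, so that the seed of $L^{*}$ has moments of every order. Writing $X=X^{*}+X^{b}$ and using $\|X\|^{2\beta}\leq C_{\beta}(\|X^{*}\|^{2\beta}+\|X^{b}\|^{2\beta})$, I handle the two pieces separately. On the compact set $K:=A+p_{0}+rtM$ the basis $L^{b}$ produces only finitely many atoms $(T_{k},J_{k})_{k=1}^{N}$ almost surely, giving the pathwise representation
\[
  X^{b}(p_{0}+rty)=\sum_{k=1}^{N}F(p_{0}+rty,T_{k})J_{k}\mathbf{1}_{\{T_{k}\in A+p_{0}+rty\}},
\]
and the continuity of $F$ on the compact product $(p_{0}+rtM)\times K$ makes $\sup_{y\in M}\|X^{b}(p_{0}+rty)\|$ a.s.\ finite, so its $2\beta$-th power integrates over $M$. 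For $X^{*}$ I invoke moment inequalities of Kwapie\'n-Woyczy\'nski type for stochastic integrals with respect to infinitely divisible random measures: since the seed of $L^{*}$ has compactly supported L\'evy measure, one obtains $\sup_{p\in p_{0}+rtM}\mathbb{E}\|X^{*}(p)\|^{q}<\infty$ for every $q>0$. Tonelli then gives $\mathbb{E}[\int_{M}\|X^{*}(p_{0}+rty)\|^{2\beta}\,\mathcal{H}^{d-1}(\mathrm{d}y)]<\infty$, and hence the integral itself is a.s.\ finite. Combining the two bounds closes the argument.

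The main technical obstacle is the uniform moment estimate for $X^{*}$: one needs a bound on $\mathbb{E}\|X^{*}(p)\|^{q}$ controlled by $|A|$ and by $\sup\|F\|$ on the relevant compact set, uniformly in $p\in p_{0}+rtM$. This amounts to bounding the cumulants of $X^{*}(p)$ uniformly in $p$, which can be done from the L\'evy-Khintchine representation using that the L\'evy measure of the seed of $L^{*}$ is compactly supported, but requires some care. The joint measurability of $X$, needed to apply Tonelli, is a subsidiary technical point handled via continuity in probability and a standard modification theorem.
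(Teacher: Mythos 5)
Your proposal is correct and follows essentially the same route as the paper: both split off the big-jump (compound Poisson) part of $L$ and bound it pathwise by an a.s.\ finite random variable built on a compact set containing all the shifted ambit sets, and both handle the remaining drift, Gaussian and small-jump components via uniform-in-$y$ moment bounds (all moments exist since the residual L\'evy measure is supported in the unit ball) followed by Tonelli. The only cosmetic difference is your extra Cauchy--Schwarz step, which forces you to control $\|X\|^{2\beta}$ rather than $\|X\|^{\beta}$ integrated against $\|f\|$; the paper avoids this by integrating directly against $\|f\|\in L^{1}(\mathcal{H}^{d-1}\downharpoonright_{M})$, but both versions go through.
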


If $X$ models the velocity vector field of an \textit{incompressible} fluid, then for $\phi(x)=x$, necessarily we must have that 
\[ 
r^{-1} Z^{\phi,r}(1,u_M)=\frac{1}{r^{d}}\mathscr{E}_{r}\rightarrow0, 
\]
as $r\downarrow0$. Within our framework, this will only be the case on very specific situations. In fact, 
as pointed out in \cite{Sauri20}, the asymptotic behaviour of $Z^{\phi,r}$
strongly depends on whether $L$ is of finite variation or not. Our study in
the latter case is performed under the following assumption \sauri{(recall that we are always assuming that $L$ is homogeneous)}. 
\begin{assumption}[${\bf{A}}_\alpha$]\label{stableattractassump}
	For a given $1<\alpha\leq2$, the characteristic triplet of $L$,
	$\left(\gamma,\Sigma,\nu\right)$, satisfies the following: 
	\begin{enumerate}
		\item If $\alpha=2$, $\Sigma\neq0$. 
		\item For $1<\alpha<2$, $\Sigma=0$ and $\nu$ admits the polar decomposition in (\ref{eq:polardecomp}). Furthermore, there is a non-zero $\lambda$-integrable function $K$ such that as $s\downarrow0$ 
		\begin{equation}
			s^{\alpha}\rho_{u}(s,\infty)\rightarrow K(u),\,\,\lambda-a.a,\label{eq:Assumptionalphastablepart1}
		\end{equation}
		and
		\begin{equation}
			\sup_{u\in\mathbb{S}^{m-1}}\sup_{0\leq s\leq T}s^{\alpha}\rho_{u}(s,\infty)<\infty,\,\,\forall \, T>0 .\label{eq:eq:Assumptionalphastablepart2}
		\end{equation}	
	\end{enumerate}
\end{assumption}
\begin{remark} 
	In the $1$-dimensional case, i.e. when $m=1$, it is well known that \eqref{eq:Assumptionalphastablepart1} implies that the distribution of the ``small jumps'' of $L$ belongs to the domain of attraction of an $\alpha$-stable distribution. Not surprisingly, the same result holds in the multivariate context  under \hyperref[stableattractassump]{Assumption ${\bf{A}}_\alpha$}, see Lemma \ref{fddapproximbystable} below. Finally, we would like to emphasize that \eqref{eq:eq:Assumptionalphastablepart2} is fulfilled if \eqref{eq:Assumptionalphastablepart1} holds and either the support of $\lambda$ is finite or $\rho_{u}$ does not depend on $u$. Examples of infinitely divisible distributions on the real line satisfying \hyperref[stableattractassump]{Assumption ${\bf{A}}_\alpha$} are discussed in \cite{Sauri20}, \cite{IvanovJ18}, and references therein.
\end{remark} 
In view of Proposition \ref{welldefZ}, we will also restrict to test functions $\phi$ of polynomial growth. Thus, for $N\in\mathbb{N}$ and $\beta\geq N$, $C_{\beta}^{N}$ will denote the family of functions
$\phi:\mathbb{R}^{d}\rightarrow\mathbb{R}^{d}$ of class $C^{N}$ such that 
\[
\| \mathscr{D}^{j}\phi(x)\| \leq C(1+\| x\| ^{\beta-j}),\,\, j=0,1,2,\ldots,N,
\]
where $\mathscr{D}^{j}\phi$ denotes the vector containing all the partial derivatives of $\phi$ of order $j$. A key example is $\phi(x)=\| x\| ^{2}x$ (the test function associated to the kinetic energy) which belongs to $C_{3}^{2}$. 

Next, we introduce some auxiliary random fields that will be used for the representation of the limit of $Z^{\phi,r}$. Recall that the support function of a compact set $M$ is defined as 
\[
h_{M}(q):=\sup\{q\cdot p:p\in M\},\,\, q\in\mathbb{R}^{d}.
\]
We associate to a gentle compact set (see Section \ref{PreliminariesJCHausdorff}) $A\subseteq\mathbb{R}^{d}$ the following $\sigma$-finite measures on $\mathcal{B}(\mathbb{R}^{+}\times\mathbb{R}^{d}\times\mathbb{S}^{d-1})$: 
\[
\mu^{\pm}_{M,A}(B):=\int_{0}^{\infty}\int_{\partial A}1_{B}(s,x+p_0,\pm n_{A}(x))h_{M}(\pm n_{A}(x))^{+}\mathcal{H}^{d-1}(\mathrm{d}x)\mathrm{d}s,
\]
where $x^{+}=\max\{0,x\}$. For every $(s,x,n)\in\mathbb{R}^{+}\times N(A)$, $f\in L^{2}(\mathcal{H}^{d-1}\downharpoonright_{M})$, and $t\geq0$,
put
\begin{equation}
	G(t,f,s,x,n):=\int_{M}f(y)^{\prime}\mathbf{1}_{[h_{M}(n)s,+\infty)}(ty\cdot n)\mathcal{H}^{d-1}(\mathrm{d}y)F(p_{0},x),\label{kernellimitYalpha}
\end{equation}
where $F$ is the kernel function representing $X$ in (\ref{mainfield}). Now, for a given $\mathbb{R}^{m}$-valued homogeneous Lévy basis satisfying \hyperref[stableattractassump]{Assumption ${\bf{A}}_\alpha$}, we construct (on an extension of $\left(\Omega,\mathcal{F},\mathbb{P}\right)$) two $\mathbb{R}^{m}$-valued independent separable L\'evy bases $\Lambda_{\alpha}^{+}$ and $\Lambda_{\alpha}^{-}$ fulfilling the following: They are strictly $\alpha$-stable, independent of $\mathcal{F}$, and their control measures are $\mu^{+}_{M,A}$ and $\mu^{-}_{M,A}$, respectively. Additionally, their seed satisfies that:
\begin{enumerate}
	\item If \textbf{$\alpha=2$}, it has covariance $\Sigma$.
	\item If $1<\alpha<2$, its spectral measure is $\bar{\lambda}(\mathrm{d}u)=\alpha K(u)\lambda(\mathrm{d}u)$.
\end{enumerate}
Finally, we let
\begin{equation}
	Y^{\alpha}(t,f):=\int_{(0,t]\times N(A)}G(t,f,s,x,n)\cdot\left[\Lambda_{\alpha}^{+}(\mathrm{d}s\mathrm{d}(x,n))-\Lambda_{\alpha}^{-}(\mathrm{d}s\mathrm{d}(x,n))\right].\label{eq:limitingfields}
\end{equation}
Under the preceding notation, we have:
\begin{theorem}\label{mainthmStable} Let \hyperref[stableattractassump]{Assumption ${\bf{A}}_\alpha$} \sauri{hold for some $1<\alpha\leq2$ and consider $X$ as in (\ref{mainfield}). Suppose in addition that $A$ is a compact gentle set}. Then, for all $\phi\in C_{\beta}^{2}$, as $r\downarrow0$,
	\[
	r^{-1/\alpha}Z^{\phi,r}(t,f)\overset{\mathcal{F}\text{-}fd}{\longrightarrow}\sum_{i,j=1}^{d}D\phi(X(p_{0}))^{(i,j)}Y^{\alpha}(t,\mathbf{e}_{j}\otimes\mathbf{e}_{i}f).
	\]
	Here $D\phi$ denotes the Jacobian of $\phi$, $\mathbf{e}_{j}$ is the $j$th element of the canonical basis of $\mathbb{R}^{d}$, \sauri{and $\mathbf{e}_{j}\otimes\mathbf{e}_{i}$ represents the autoproduct between these two vectors.}
\end{theorem}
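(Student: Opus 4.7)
The plan is to Taylor-expand $\phi$ around $X(p_{0})$, reduce the problem to the linear increment functional of $X$, and then identify the leading contribution as a stable integral over a thin sliver near $\partial A+p_{0}$ parameterized via the normal bundle $N(A)$.

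First, since $\phi\in C_{\beta}^{2}$, a second-order Taylor expansion gives
\[
\phi(X(p_{0}+rty))-\phi(X(p_{0}))=D\phi(X(p_{0}))[X(p_{0}+rty)-X(p_{0})]+R_{r}(y),
\]
with the pointwise estimate $\|R_{r}(y)\|\le C(1+\|X(p_{0})\|^{\beta-2}+\|X(p_{0}+rty)\|^{\beta-2})\|X(p_{0}+rty)-X(p_{0})\|^{2}$. Standard $L^{p}$-bounds (for $p<\alpha$) for integrals against $L$, combined with the continuity of $F$, yield $\|X(p_{0}+rty)-X(p_{0})\|=O_{\mathbb{P}}(r^{1/\alpha})$ in a sense uniform enough in $y\in M$ for the integrated remainder to be $O_{\mathbb{P}}(r^{2/\alpha})$; after division by $r^{1/\alpha}$ this is $\mathrm{o}_{\mathbb{P}}(1)$ (using $\alpha>1$). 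This reduces the theorem to identifying the stable limit of the rescaled linear functional $r^{-1/\alpha}\int_{M}D\phi(X(p_{0}))[X(p_{0}+rty)-X(p_{0})]\cdot f(y)\,\mathcal{H}^{d-1}(\mathrm{d}y)$.

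Next, I would decompose the increment using the defining representation of $X$ as
\[
X(p_{0}+rty)-X(p_{0})=I_{r}^{+}(y)-I_{r}^{-}(y)+J_{r}(y),
\]
where $I_{r}^{\pm}(y)$ are integrals of $F$ against $L$ over the slivers $(A+p_{0}+rty)\setminus(A+p_{0})$ and $(A+p_{0})\setminus(A+p_{0}+rty)$, and $J_{r}(y)$ is the integral of the kernel difference $F(p_{0}+rty,\cdot)-F(p_{0},\cdot)$ over the overlap. Since $F\in C^{1}$ and the overlap has bounded Lebesgue measure, $J_{r}(y)=O_{\mathbb{P}}(r)=\mathrm{o}_{\mathbb{P}}(r^{1/\alpha})$. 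To analyze the slivers I would exploit the gentleness of $A$: for $\mathcal{H}^{d-1}$-a.e.\ $x\in\partial A$, a point of the outward sliver near $x+p_{0}$ has the form $x+p_{0}+\tau n_{A}(x)$ with $0<\tau<rt(y\cdot n_{A}(x))^{+}+\mathrm{o}(r)$, and similarly with $-n_{A}(x)$ for the inward sliver. Rescaling $\tau=ru$ and then $u=h_{M}(n_{A}(x))s$---the Jacobian $h_{M}(n_{A}(x))^{+}$ of which is precisely the factor appearing in the control measure $\mu_{M,A}^{+}$---followed by Fubini, rewrites the rescaled functional in terms of the kernel $G(t,f,s,x,n)$ of \eqref{kernellimitYalpha} integrated against rescaled restrictions of $L$ over disjoint outward and inward parallel sets.

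The convergence in distribution of these rescaled restrictions to the strictly $\alpha$-stable bases $\Lambda_{\alpha}^{+}$ and $\Lambda_{\alpha}^{-}$ is supplied by Assumption $\mathbf{A}_{\alpha}$: \eqref{eq:Assumptionalphastablepart1}--\eqref{eq:eq:Assumptionalphastablepart2} ensure that the small jumps of $L$ dominate and belong to the domain of attraction of the prescribed strictly $\alpha$-stable law with spectral measure $\bar\lambda$, while the outward and inward parallel sets being asymptotically disjoint yields independence of $\Lambda_{\alpha}^{\pm}$. The $\mathcal{F}$-stability follows because the large jumps of $L$ inside the sliver, which would carry the $\mathcal{F}$-dependence through $X(p_{0})$ and $D\phi(X(p_{0}))$, occur there only with probability $O(r)\to 0$, so after truncation one may replace the small-jump contribution by the $\mathcal{F}$-independent pair $(\Lambda_{\alpha}^{+},\Lambda_{\alpha}^{-})$ without affecting the limit; the same argument handles joint convergence for any finite collection $(t_{k},f_{k})$. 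The main obstacle I expect is this last step: carrying out the sliver parameterization rigorously under only the gentleness assumption on $A$, controlling the $\mathrm{o}(r)$ errors uniformly in $y\in M$, and establishing the joint $\mathcal{F}$-stable limit for integrals of the rescaled basis over shrinking parallel sets---essentially a multivariate, $\mathcal{F}$-stable upgrade of the two-dimensional analysis in \cite{Sauri20}.
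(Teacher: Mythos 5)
Your proposal follows essentially the same route as the paper: a second-order Taylor linearization reducing to the identity test function (the paper's Lemma \ref{lemmaapproxJac}), a split of the increment into an overlap term carrying the kernel difference (shown to be $O_{\mathbb{P}}(r)=\mathrm{o}_{\mathbb{P}}(r^{1/\alpha})$, as in Theorem \ref{DivergenceThmClass}) plus inner and outer slivers, a normal-bundle/Steiner-formula parameterization of those slivers producing the $h_{M}(\pm n_{A}(x))^{+}$ Jacobian in the control measures $\mu^{\pm}_{M,A}$, the domain-of-attraction replacement of $L$ by strictly $\alpha$-stable bases under Assumption ${\bf A}_\alpha$ (Lemma \ref{fddapproximbystable}), and an asymptotic-independence argument upgrading weak to $\mathcal{F}$-stable convergence. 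The technical obstacles you flag (rigorous sliver parameterization under gentleness, uniform error control, and the stable upgrade) are precisely the points the paper resolves via the Kiderlen--Rataj results and the cumulant-function computation, so the plan is sound and matches the paper's proof.
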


The finite variation case substantially differs from the preceding framework. More precisely:

\begin{theorem}\label{mainthmFV}\sauri{Let $X$ be defined as in (\ref{mainfield}) and assume} that $\Sigma=0$ and $\int_{\mathbb{R}^{m}}(1\land\| x\| )\nu(\mathrm{d}x)<+\infty.$ Suppose in addition that $A$ is a compact gentle set. Then, for all $\phi\in C_{\beta}^{2}$, $t\geq0$, and $f\in L^{2}(\mathcal{H}^{d-1}\downharpoonright_{M})$, as $r\downarrow0$,
	\[
	\frac{1}{r}Z^{\phi,r}(t,f)\overset{\mathbb{P}}{\rightarrow}t\sum_{i,j=1}^{d}D\phi(X(p_{0}))^{(i,j)}\mathcal{D}_{X}^{(i,j)}(f,p_{0}),
	\]
	where 
	\[ 
	\mathcal{D}_{X}^{(i,j)}(f,p_{0}):=\int_{M}f(y)^{\prime}\mathbf{e}_{i}\otimes\mathbf{e}_{j}DX(p_{0})y\mathcal{H}^{d-1}(\mathrm{d}y),
	\]
	in which  for $i,k=1,\ldots,d,$ and $\gamma_{0}=\gamma-\int_{\|x\|\leq 1}x\nu(\mathrm{d}x)$, we have let
	\begin{equation}
		DX(p)^{(i,k)}=\sum_{j=1}^{m}\int_{A+p} \partial_{k+d}F^{(i,j)}(p,q) \gamma_0^{(i)} \mathrm{d}q + \int_{A+p}\partial_{k}F^{(i,j)}(p,q)L^{(j)}(\mathrm{d}q).\label{eq:defDX}
	\end{equation}
\end{theorem}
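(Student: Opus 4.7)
The plan is to establish a.s.\ differentiability of $X$ at $p_{0}$ with gradient given by \eqref{eq:defDX}, and then reduce $r^{-1}Z^{\phi,r}(t,f)$ to a deterministic surface integral involving $DX(p_{0})$ via a first-order Taylor expansion of $\phi$.

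Exploiting the finite-variation structure, I would first decompose $L(dq)=\gamma_{0}\,dq+\tilde L(dq)$, with $\tilde L$ a driftless pure-jump FV L\'evy basis given by a Poisson random measure $\sum_{i}\delta_{(t_{i},J_{i})}$ on $\mathbb R^{d}\times\mathbb R^{m}$ with intensity $\mathrm{Leb}\otimes\nu$. This yields the pathwise identity
\begin{equation*}
X(p)=\int_{A+p}F(p,q)\gamma_{0}\,dq+\sum_{i:\,t_{i}\in A+p}F(p,t_{i})J_{i},
\end{equation*}
with the sum converging absolutely a.s.\ thanks to $\int(1\wedge\|x\|)\,\nu(dx)<\infty$ and the compactness of $A$. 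For the drift integral, Reynolds' substitution $q=u+p$ gives $\partial_{p_{k}}\int_{A+p}F(p,q)\gamma_{0}\,dq=\int_{A+p}[\partial_{k}F+\partial_{k+d}F](p,q)\gamma_{0}\,dq$. Each jump summand is $C^{1}$ in $p$ whenever $t_{i}\notin\partial(A+p)$; gentleness of $A$ implies $\mathrm{Leb}(\partial A)=0$, so Fubini applied to $\nu\otimes\mathrm{Leb}$ yields $\mathbb P(\exists i:\, t_{i}\in\partial(A+p_{0}))=0$. Term-by-term differentiation, justified by the $C^{1}$ bound on $F$ over compact sets together with the FV integrability, produces $\partial_{p_{k}}\sum_{i}F(p,t_{i})J_{i}\mathbf 1_{t_{i}\in A+p}=\int_{A+p}\partial_{k}F(p,q)\tilde L(dq)$. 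Summing the two parts and rewriting $\tilde L=L-\gamma_{0}\mathrm{Leb}$ cancels the $\partial_{k}F\cdot\gamma_{0}$ pieces and recovers precisely \eqref{eq:defDX}. The same argument, carried out uniformly for $p$ in a neighborhood of $p_{0}$, upgrades this to $X(p_{0}+h)-X(p_{0})=DX(p_{0})h+o_{\mathbb P}(\|h\|)$, uniformly for $h$ on compact sets of directions.

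With differentiability in hand, Taylor's theorem for $\phi\in C_{\beta}^{2}$ gives
\begin{equation*}
\phi(X(p_{0}+rty))-\phi(X(p_{0}))=D\phi(X(p_{0}))\bigl[X(p_{0}+rty)-X(p_{0})\bigr]+R(y,r),
\end{equation*}
with $\|R(y,r)\|\leq C\bigl(1+\sup_{\lambda\in[0,1]}\|X(p_{0}+\lambda rty)\|^{\beta-2}\bigr)\|X(p_{0}+rty)-X(p_{0})\|^{2}$. Dividing by $r$ and substituting the first-order expansion of $X$ yields pointwise convergence to $t\,D\phi(X(p_{0}))DX(p_{0})y$, with an $o_{\mathbb P}(1)$ error uniformly in $y\in M$. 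Dominated convergence in $L^{1}(\mathcal H^{d-1}\downharpoonright_{M})$—the dominating function coming from the polynomial growth of $D\phi$ and moment bounds on $\sup_{y\in M,\,r\in(0,1]}\|X(p_{0}+rty)\|$ inherited from the integrability of $L$—then lets the limit pass inside the surface integral, producing $t\int_{M}f(y)\cdot D\phi(X(p_{0}))DX(p_{0})y\,\mathcal H^{d-1}(dy)$, which upon expansion in the canonical basis is exactly the stated limit.

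The main obstacle is the a.s.\ $C^{1}$-regularity of $X$ at $p_{0}$. One has to (a) control the passage of jump locations through the moving boundary $\partial(A+p)$, which is where the gentleness of $A$ and the absolute continuity of the Poisson spatial intensity enter, and (b) justify term-by-term differentiation of an a.s.\ infinite jump series, which depends on the $C^{1}$ bounds for $F$ together with $\int(1\wedge\|x\|)\,\nu(dx)<\infty$ on compact slices. Once this regularity step is secured, the remainder of the argument is a routine Taylor expansion followed by dominated convergence.
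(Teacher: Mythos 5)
Your final formula is the right one, and you correctly locate the source of the $\partial_{k+d}F\,\gamma_{0}$ term in a Reynolds/Leibniz differentiation of the drift integral $\int_{A+p}F(p,q)\gamma_{0}\,\mathrm{d}q$. But the central step of your argument — almost-sure $C^{1}$-regularity of $X$ at $p_{0}$ obtained by term-by-term differentiation of the jump series — does not hold in the generality of the theorem, and this is a genuine gap rather than a routine verification. The hypothesis is only $\int(1\wedge\|x\|)\,\nu(\mathrm{d}x)<\infty$, so $\nu$ may be infinite; then almost surely infinitely many atoms of $N$ lie in every neighbourhood of $\partial(A+p_{0})$, and for every $p\neq p_{0}$ the symmetric difference $(A+p)\triangle(A+p_{0})$ contains infinitely many jumps. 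Each summand $F(p,t_{i})J_{i}\mathbf{1}_{t_{i}\in A+p}$ is therefore \emph{discontinuous} in $p$, the map $p\mapsto X^{(5)}(p)$ is not a.s.\ differentiable (nor, in general, continuous) at $p_{0}$, and your observation that $\mathbb{P}(\exists i:\,t_{i}\in\partial(A+p_{0}))=0$ only rules out atoms sitting on the boundary at the single point $p=p_{0}$ — it says nothing about the flux of atoms across the \emph{moving} boundary, which is exactly where the first-order contribution could live. The expected mass of jumps in $(A+p)\triangle(A+p_{0})$ is of order exactly $\|p-p_{0}\|$ (by the gentle-set estimate $\mathrm{Leb}(A_{\oplus r}\setminus A)+\mathrm{Leb}(A\setminus A\ominus rD_{1})=O(r)$ of Kiderlen--Rataj), so it is not negligible for free: what saves the theorem is that the \emph{uncompensated} pure-jump part $\tilde L$ has zero drift, so that $\tfrac1r\tilde L(B_{r})\overset{\mathbb{P}}{\to}0$ when $\mathrm{Leb}(B_{r})=O(r)$ — a small-time law-of-large-numbers statement, proved via $s\int(e^{\mathbf{i}z\cdot x/s}-1)\nu(\mathrm{d}x)\to0$. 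Your proposal never supplies this argument; "the $C^{1}$ bound on $F$ together with FV integrability" cannot justify differentiating through indicators.

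This is also where your approach diverges structurally from the paper's. The paper never asserts differentiability of $X$; it works with the linear functional $Y_{r}(t,f)=\int_{M}\langle X(rty)-X(0),f(y)\rangle\,\mathcal{H}^{d-1}(\mathrm{d}y)$ directly, splits it into an interior term over $A\cap(\cap_{k}A\ominus rt_{k}M)$ (which yields $\int_{A+p}\partial_{k}F\,L(\mathrm{d}q)$ by a deterministic Taylor expansion of $F$ inside the characteristic exponent) and two boundary-layer terms over $A\setminus(A\ominus rtM)$ and $(A\oplus rtM)\setminus A$, and evaluates the latter by computing characteristic exponents with Steiner's formula for gentle sets; in the finite-variation case the layer terms degenerate to the drift $\gamma_{0}$, and a Fubini/divergence-theorem computation converts the resulting surface integral over $\partial A$ into $\int_{A+p_{0}}\partial_{k+d}F\,\gamma_{0}\,\mathrm{d}q$. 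If you want to salvage your pathwise route, you must (i) replace "term-by-term differentiation" by a proof that $\tfrac1r\int_{(A+rty)\triangle A}F\,\tilde L(\mathrm{d}q)\overset{\mathbb{P}}{\to}0$, and (ii) justify interchanging this convergence in probability with the surface integral over $y\in M$ (your claimed uniformity in $y$ of the $o_{\mathbb{P}}(1)$ is asserted, not proved; an $L^{1}(\mathbb{P}\otimes\mathcal{H}^{d-1})$ bound after truncating large jumps would do). The reduction from general $\phi\in C^{2}_{\beta}$ to the identity via Taylor expansion and moment bounds is sound and matches the paper's Lemma on that point.
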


\begin{remark} \label{remarkgenMThms}
	The following remarks are in order:
	\begin{enumerate}
		\item As mentioned above, $Z^{\phi,r}(t,f)$ can be seen as the average of the increments $\phi(X(p_0+rty))-\phi(X(p_0))$ over $M$. Therefore, in the terminology of \cite{Falconer02}, the limits appearing in Theorems \ref{mainthmStable} and \ref{mainthmFV} can be seen as the average over $M$ of all the tangent fields around $p_0$ of the field $\phi(X(\cdot))$. In fact, our techniques show that if the assumptions of  Theorem \ref{mainthmStable}  hold, then the sequence 
		\[
		r^{-1/\alpha}[\phi(X(p_0+rty))-\phi(X(p_0))],
		\]
		converges stably in distribution towards 
		\[
		D\phi(X(p_{0}))\int_{(0,t]\times N(A)}F(p_0,x)\left[\Lambda_{\alpha}^{+}(\mathrm{d}s\mathrm{d}(x,n))-\Lambda_{\alpha}^{-}(\mathrm{d}s\mathrm{d}(x,n))\right],
		\]
		where $\Lambda_{\alpha}^{\pm}$ as above but we replace $M$ by $\{y\}$. A similar result holds under the set-up of Theorem \ref{mainthmFV}.
		\item Note that $Y^{\alpha}$ is degenerated when $F(p_{0},\cdot+p_0)$ vanishes in $\partial A$, e.g. when $\partial A=\mathbb{S}^{d-1}$ and $F(p,q)=(1-\|p-q\|^2)G(p,q)$, for some vector-valued function $G$.  In fact, by looking at the proof of Theorem \ref{mainthmStable}, in this situation and as long as $A$ is gentle, the conclusion of Theorem \ref{mainthmFV} remains valid if we replace $\gamma_{0}$ by $\gamma$ in \eqref{eq:defDX}. This result is valid independently of whether \hyperref[stableattractassump]{Assumption ${\bf{A}}_\alpha$} is satisfied or not.
		\item There are other special situations in which Theorem \ref{mainthmFV} can be extended (irrespectively of the behaviour of  $F(p_{0},\cdot)$ in $\partial A$) in the infinite variation case. For instance, if $\phi(x)=x$ and $L$ is strictly 1-stable with spectral measure $\lambda$ and drift $\gamma$, our methods show that 
		\[
		\frac{1}{r}Z^{\phi,r}(t,f)\overset{\mathcal{F}\text{-}fd}{\longrightarrow}\sum_{i}Y^{1}(t,f^{i}\mathbf{e}_{i})+t\mathcal{D}_{X}^{(i,i)}(p_{0}),
		\]
		where $Y^{1}$ is defined in the same way as $Y^\alpha$ but $\Lambda_{\alpha}^{+}$ and $\Lambda_{\alpha}^{-}$ are replaced by non-trivial strictly 1-stable  L\'evy bases with spectral measure $\lambda$ and drift $\gamma$.
	\end{enumerate}
\end{remark}

\subsection{Processes induced by energy fluxes}

In this subsection, we study some probabilistic properties of the class of processes induced by the limit of energy fluxes of the form of \eqref{energifluxdef}. We start by describing the local behaviour of the energy flux
associated with $X$. In light of the relation $\mathscr{E}_{rt}=(rt)^{d-1}Z^{\phi,r}(t,u_{M})$, we deduce from Theorems \ref{mainthmStable} and \ref{mainthmFV}, and  the classical Divergence Theorem that: 

\begin{corollary}\label{resultonflux} Let $X$ be as in  (\ref{mainfield})  where $A$ is a compact gentle set. Then, for every $\phi\in C_{\beta}^{2}$, the following holds: 
	\begin{enumerate}
		\item Under \hyperref[stableattractassump]{Assumption ${\bf{A}}_\alpha$}, as $r\downarrow0$, 
		\[
		\frac{1}{r^{d-\frac{\alpha-1}{\alpha}}}\mathscr{E}_{rt}\overset{\mathcal{F}\text{-}fd}{\longrightarrow}t^{d-1}\sum_{i,j=1}^{d}D\phi(X(p_{0}))^{(i,j)}Y^{\alpha}(t,\mathbf{e}_{j}\otimes\mathbf{e}_{i}u_{M}).
		\]
		\item If $\Sigma=0$ and $\int_{\mathbb{R}^{m}}(1\land\| x\| )\nu(\mathrm{d}x)<+\infty$, then as $r\downarrow0$, 
		\[
		\frac{1}{\mathrm{Leb}(r\mathfrak{D})}\mathscr{E}_{rt}\overset{\mathbb{P}}{\rightarrow}t^{d}\sum_{i,j=1}^{d}D\phi(X(p_{0}))^{(i,j)}DX(p_{0})^{(j,i)}.
		\]
	\end{enumerate}
\end{corollary}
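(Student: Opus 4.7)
The plan is to reduce both items to Theorems \ref{mainthmStable} and \ref{mainthmFV} via the scaling identity
$$\mathscr{E}_{rt}=(rt)^{d-1}Z^{\phi,r}(t,u_{M}),\quad r>0,\;t\geq 0.$$
This identity follows by changing variables $y=p_{0}+rtz$ in \eqref{energifluxdef}, using $\mathcal{H}^{d-1}(rt\,B)=(rt)^{d-1}\mathcal{H}^{d-1}(B)$ and $u_{rtM+p_{0}}(p_{0}+rtz)=u_{M}(z)$; the constant $\phi(X(p_{0}))$ may then be subtracted freely inside the surface integral because the divergence theorem applied component-wise gives $\int_{M}u_{M}\,\mathcal{H}^{d-1}=0$.

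For item (1), note that $d-1+1/\alpha=d-(\alpha-1)/\alpha$, so that
$$r^{-(d-(\alpha-1)/\alpha)}\mathscr{E}_{rt}=t^{d-1}\cdot r^{-1/\alpha}Z^{\phi,r}(t,u_{M}).$$
Since $u_{M}$ is bounded and $M$ is compact, $u_{M}\in L^{2}(\mathcal{H}^{d-1}\downharpoonright_{M})$, and Theorem \ref{mainthmStable} with $f=u_{M}$ gives the claimed $\mathcal{F}$-stable finite-dimensional convergence (the deterministic prefactor $t^{d-1}$ is absorbed in the limit).

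For item (2), Theorem \ref{mainthmFV} with $f=u_{M}$ yields
$$r^{-1}Z^{\phi,r}(t,u_{M})\overset{\mathbb{P}}{\rightarrow}t\sum_{i,j=1}^{d}D\phi(X(p_{0}))^{(i,j)}\mathcal{D}_{X}^{(i,j)}(u_{M},p_{0}).$$
Unfolding the tensor $\mathbf{e}_{i}\otimes\mathbf{e}_{j}$ in the definition of $\mathcal{D}_{X}$,
$$\mathcal{D}_{X}^{(i,j)}(u_{M},p_{0})=\sum_{k=1}^{d}DX(p_{0})^{(j,k)}\int_{M}u_{M}^{(i)}(y)\,y^{(k)}\,\mathcal{H}^{d-1}(\mathrm{d}y),$$
and a second application of the divergence theorem gives $\int_{M}u_{M}^{(i)}(y)\,y^{(k)}\,\mathcal{H}^{d-1}(\mathrm{d}y)=\delta_{ik}\,\mathrm{Leb}(\mathfrak{D})$, hence $\mathcal{D}_{X}^{(i,j)}(u_{M},p_{0})=\mathrm{Leb}(\mathfrak{D})\,DX(p_{0})^{(j,i)}$. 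Combining this with the scaling identity and $\mathrm{Leb}(r\mathfrak{D})=r^{d}\mathrm{Leb}(\mathfrak{D})$ produces
$$\frac{\mathscr{E}_{rt}}{\mathrm{Leb}(r\mathfrak{D})}=\frac{t^{d-1}}{\mathrm{Leb}(\mathfrak{D})}\cdot\frac{Z^{\phi,r}(t,u_{M})}{r}\overset{\mathbb{P}}{\rightarrow}t^{d}\sum_{i,j=1}^{d}D\phi(X(p_{0}))^{(i,j)}DX(p_{0})^{(j,i)}.$$

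No substantive obstacle is anticipated: the probabilistic content lives entirely inside the two theorems, which are here applied verbatim. The only matters requiring care are verifying the scaling identity (in particular, the vanishing of $\int_{M}u_{M}\,\mathcal{H}^{d-1}$), and the index bookkeeping in the tensor contraction that reduces $\mathcal{D}_{X}^{(i,j)}(u_{M},p_{0})$ to $\mathrm{Leb}(\mathfrak{D})\,DX(p_{0})^{(j,i)}$ via a second application of the divergence theorem.
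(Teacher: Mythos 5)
Your proposal is correct and follows exactly the route the paper intends: the scaling identity $\mathscr{E}_{rt}=(rt)^{d-1}Z^{\phi,r}(t,u_{M})$ (justified by the Divergence Theorem applied to the constant field $\phi(X(p_{0}))$) combined with Theorems \ref{mainthmStable} and \ref{mainthmFV} applied to $f=u_{M}$. Your explicit tensor contraction showing $\mathcal{D}_{X}^{(i,j)}(u_{M},p_{0})=\mathrm{Leb}(\mathfrak{D})\,DX(p_{0})^{(j,i)}$ via $\int_{M}u_{M}^{(i)}(y)y^{(k)}\,\mathcal{H}^{d-1}(\mathrm{d}y)=\delta_{ik}\,\mathrm{Leb}(\mathfrak{D})$ correctly fills in the bookkeeping the paper leaves implicit.
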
 

It is clear that the nature of the limit processes
appearing in the previous result can be described solely by the process $(Y^{\alpha}(t,\mathbf{e}_{j} \otimes \mathbf{e}_{i}u_{M}))_{t\geq0}$. For instance, using the spectral representation \eqref{eq:limitingfields} of $Y^{\alpha}$ together with its independence from $\mathcal{F}$, we easily deduce that the limit processes in Corollary \ref{resultonflux} are self-similar of index $d-\frac{\alpha-1}{\alpha}$ and $d$, respectively.
Therefore, for the rest of this section, we focus on studying the process $(Y^{\alpha} (t,\mathbf{e}_{j} \otimes \mathbf{e}_{i}u_{M}))_{t\geq0}$. For notational convenience, from now on we will write $Y_{t}^{\alpha}$ instead
of $Y^{\alpha} (t, \mathbf{e}_{j} \otimes \mathbf{e}_{i} u_{M})$.

Our next goal is to describe the path properties of $Y^{\alpha}$ when $M$ is an affine transformation of the sphere of the form
\begin{equation}
	M=T\mathbb{S}^{d-1},\label{eq:affinetransphere}
\end{equation}
in which $T$ is an invertible $d\times d$ matrix. Note that by the self-similarity $Y^{\alpha}$ cannot be differentiable at $0$ unless it is identically zero (see Remark \ref{remarkgenMThms}). Surprisingly, however,
the paths of $Y^{\alpha}$ are typically absolutely continuous. These findings are described in the next result, in which we will use the
following notation: 
\[
\mathfrak{H}(\ell,n):=\{y\in\mathbb{R}^{d}:y\cdot n=\ell\},\,\,n\in\mathbb{S}^{d-1},\;\ell\in\mathbb{R},
\]
and 
\[
\varphi(\rho):=(1-\rho^{2})^{\frac{d-1}{2}} ,\,\,\, -1\leq\rho\leq1.
\]

\begin{theorem}\label{thmpathsYalpha}
	Let $M$ be as in (\ref{eq:affinetransphere}). Then, for all $1<\alpha<2$, the process $(Y_{t}^{\alpha})_{t\geq0}$ admits a modification that has absolutely continuous paths almost surely with derivative 
	\[
	\frac{dY_{t}^{\alpha}}{dt}=\int_{(0,t]\times N(A)}g(t,s,x,n)\cdot\left(\Lambda_{\alpha}^{+}(\mathrm{d}s\mathrm{d}(x,n))-\Lambda_{\alpha}^{-}(\mathrm{d}s\mathrm{d}(x,n))\right),
	\]
	where
	\[
	g(t,s,x,n):=\partial_{t}\varphi(s/t)(n\cdot\mathbf{e}_{i})\mathbf{e}_{j}^{\prime}F(p_{0},x)\mathcal{H}^{d-1}(T(D_{1}\cap\upsilon(n)^{\perp})),\,\,s\leq t,
	\]
	in which $\upsilon(n):=T^{\prime}n/\lVert T^{\prime}n\rVert$, $\upsilon(n)^{\perp}=\mathfrak{H}(0,\upsilon(n))$ and $D_{1}$ is the unit open disk. If $d\geq3$, then the same result holds for $\alpha=2$.
\end{theorem}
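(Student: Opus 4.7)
The plan is to reduce the inner surface integral defining the kernel of $Y^\alpha_t$ to an explicit scalar depending on $(s,t,x,n)$, and then to rewrite $Y_t^\alpha$ as a time integral by a stochastic Fubini argument, from which the derivative can be read off directly.

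\textbf{Step 1: explicit form of $G$.} For $f(y)=\mathbf{e}_j\otimes\mathbf{e}_i\, u_M(y)=(u_M(y)\cdot\mathbf{e}_i)\mathbf{e}_j$, the inner surface integral in \eqref{kernellimitYalpha} becomes
\[
\int_M (u_M(y)\cdot \mathbf{e}_i)\,\mathbf{1}_{\{y\cdot n\geq h_M(n)s/t\}}\,\mathcal{H}^{d-1}(\mathrm{d}y).
\]
Applying the Divergence Theorem to the constant vector field $\mathbf{e}_i$ on the cap $\mathfrak{D}\cap\{y\cdot n>h_M(n)s/t\}$, whose boundary decomposes into a portion of $M$ (normal $u_M$) and the flat section $\mathfrak{D}\cap \mathfrak{H}(h_M(n)s/t,n)$ (normal $-n$), this integral reduces to $(n\cdot \mathbf{e}_i)\,\mathcal{H}^{d-1}(\mathfrak{D}\cap \mathfrak{H}(h_M(n)s/t,n))$. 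Since $\mathfrak{D}=TD_1$ and $h_M(n)=\|T'n\|$, the flat section equals $T\bigl(\rho\upsilon(n)+\sqrt{1-\rho^2}(D_1\cap \upsilon(n)^\perp)\bigr)$ with $\rho=s/t$, and by linearity of $T$ together with the translation invariance of $\mathcal{H}^{d-1}$, its measure factorizes as $\varphi(s/t)\,\mathcal{H}^{d-1}(T(D_1\cap \upsilon(n)^\perp))$. Hence $G(t,f,s,x,n)=\varphi(s/t)\,c(x,n)\mathbf{1}_{\{s\leq t\}}$ with $c(x,n):=(n\cdot\mathbf{e}_i)\mathbf{e}_j'F(p_0,x)\mathcal{H}^{d-1}(T(D_1\cap\upsilon(n)^\perp))$.

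\textbf{Step 2: stochastic Fubini.} Because $\varphi(1)=0$ and $\varphi$ is smooth on $[0,1)$, I have $\varphi(s/t)=\int_s^t \partial_u\varphi(s/u)\,\mathrm{d}u$ for $0<s\leq t$. Substituting into \eqref{eq:limitingfields} and interchanging the Lebesgue integral $\mathrm{d}u$ with the stochastic integrals against $\Lambda^\pm_\alpha$, via a stochastic Fubini theorem for independently scattered strictly $\alpha$-stable measures (resp.\ Gaussian in the case $\alpha=2$), yields
\[
Y_t^\alpha=\int_0^t\!\int_{(0,u]\times N(A)}g(u,s,x,n)\cdot(\Lambda_\alpha^+-\Lambda_\alpha^-)(\mathrm{d}s\,\mathrm{d}(x,n))\,\mathrm{d}u.
\]
Setting $\dot Y_u$ equal to the jointly measurable version of the inner stochastic integral produced by the Fubini step, and defining $\tilde Y_t:=\int_0^t \dot Y_u\,\mathrm{d}u$, gives a pathwise absolutely continuous process satisfying $\tilde Y_t=Y_t^\alpha$ almost surely for every $t$, i.e.\ the desired modification.

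\textbf{Step 3: integrability, and the main obstacle.} The crux is verifying the hypothesis of the stochastic Fubini, namely joint $L^\alpha(\mathrm{d}u\otimes\mathrm{d}\mu_{M,A}^\pm)$-integrability (resp.\ $L^2$ when $\alpha=2$) of the kernel $(u,s,x,n)\mapsto \mathbf{1}_{[0,u]}(s)\,\partial_u\varphi(s/u)\,c(x,n)$ on $[0,t]\times (0,t]\times N(A)$. Because $A$ is compact and gentle, $F$ is $C^1$, and $n\mapsto \mathcal{H}^{d-1}(T(D_1\cap \upsilon(n)^\perp))$ is bounded, the $(x,n)$-factor $c(x,n)$ is bounded and $\mu^\pm_{M,A}$-integrable on the relevant support. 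A direct computation gives
\[
\partial_u\varphi(s/u)=(d-1)\,s^{2}u^{-3}\bigl(1-(s/u)^{2}\bigr)^{(d-3)/2},
\]
which blows up like $(u-s)^{(d-3)/2}$ as $s\uparrow u$. Thus $|\partial_u\varphi(s/u)|^\alpha$ is $\mathrm{d}s$-integrable near the diagonal iff $\alpha(d-3)/2>-1$, i.e.\ automatically whenever $d\geq 3$, and only for $\alpha<2$ when $d=2$, matching exactly the dichotomy in the statement. Handling this singular integrability carefully (and ruling out $(d,\alpha)=(2,2)$) is the main technical obstacle; once it is settled, Step 2 identifies the pathwise derivative and completes the proof.
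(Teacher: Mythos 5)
Your overall route is the same as the paper's: compute $G(t,\mathbf{e}_j\otimes\mathbf{e}_i u_M,s,x,n)$ explicitly via the Divergence Theorem and the affine factorization $\mathfrak{D}\cap\mathfrak{H}(h_M(n)s/t,n)=T(D_1\cap\mathfrak{H}(s/t,\upsilon(n)))$, write $\varphi(s/t)=\int_s^t\partial_u\varphi(s/u)\,\mathrm{d}u$, and interchange integrals by a stochastic Fubini theorem. Steps 1 and 2 are correct (and your derivative formula $\partial_u\varphi(s/u)=(d-1)s^2u^{-3}(1-(s/u)^2)^{(d-3)/2}$ agrees with the paper's $-\varphi'(y)y/u$ after the substitution $s=uy$).

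The gap is in Step 3: the hypothesis you propose to verify --- joint $L^\alpha(\mathrm{d}u\otimes\mu^{\pm}_{M,A})$-integrability of the kernel --- is not the condition of the relevant Fubini theorem, and it in fact \emph{fails} in the Gaussian case for every $d$, so your argument as written cannot deliver the $\alpha=2$, $d\geq3$ part of the statement. Concretely, substituting $s=uy$ gives
\[
\int_0^u\lvert\partial_u\varphi(s/u)\rvert^{\alpha}\,\mathrm{d}s
= C_d\,u^{1-\alpha}\int_0^1 y^{2\alpha}\,(1-y^2)^{\alpha(d-3)/2}\,\mathrm{d}y,
\]
so the joint integral carries the factor $\int_0^t u^{1-\alpha}\,\mathrm{d}u$, which diverges at $u=0$ precisely when $\alpha=2$, independently of $d$; your diagonal analysis only captures the $y$-integral and misses this. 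The correct hypothesis (the one the paper verifies, citing the stochastic Fubini theorem of Basse-O'Connor and Barndorff-Nielsen, cf.\ Lemma 3 in Sauri (2020)) is the iterated condition
\[
\int_0^t\Bigl(\int_{(0,u]\times N(A)}\lVert g(u,s,x,n)\rVert^{\alpha}\,\mu^{\pm}_{M,A}(\mathrm{d}s\,\mathrm{d}(x,n))\Bigr)^{1/\alpha}\mathrm{d}u<\infty,
\]
under which the $u$-integral becomes $\int_0^t u^{(1-\alpha)/\alpha}\,\mathrm{d}u<\infty$ for all $1<\alpha\leq2$, and the only remaining constraint is the integrability of $(1-y^2)^{\alpha(d-3)/2}$ near $y=1$, i.e.\ $\alpha(d-3)/2>-1$ --- which yields exactly the stated dichotomy. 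For $1<\alpha<2$ your stronger joint condition does hold and implies the iterated one (by H\"older in $u$), so that part of your argument survives; but to cover $\alpha=2$, $d\geq3$ you must replace the joint condition by the iterated one and cite a Fubini theorem stated in that form.
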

\begin{proof} 
	The proof consists in verifying that for $\mu^{\pm}_{M,A}$-a.a. $(s,x,n)\in\mathbb{R}^{+} \times \mathbb{R}^{d} \times \mathbb{S}^{d-1}$
	\begin{equation} 
		\int_{s}^{t}g(r,s,x,n) \mathrm{d}r = G(t, \mathbf{e}_{j} \otimes \mathbf {e}_{i}u_{M},s,x,n), \label{eq:acG}
	\end{equation}
	and that the stochastic Fubini theorem can be applied. From (\ref{semiclsedformkernellimitYalpha-1}) in Subsection \ref{subsec:A-kernel-representation} below, we have that for $\mu^{\pm}_{M,A}$-a.a. $(s,x,n)\in\mathbb{R}^{+}\times\mathbb{R}^{d}\times\mathbb{S}^{d-1}$ with $0<s<t$ and $h_{M}(n)>0$ it holds that
	\begin{equation}		
		G(t,\mathbf{e}_{j} \otimes \mathbf{e}_{i}u_{M},s,x,n) = (n\cdot\mathbf{e}_{i}) \mathbf{e}_{j}^{\prime} F(p_{0},x)\mathcal{H}^{d-1}(\mathfrak{D} \cap \mathfrak{H}(h_{M}(n)s/t , n)). \label{semiclsedformkernellimitYalpha}
	\end{equation}
	In what follows we fix such $(s,x,n)$. Using that $h_M(n)=\lVert T^{\prime}n\rVert$ it follows easily that
	\[
	\mathfrak{D}\cap\mathfrak{H}(h_{M}(n)s/t,n)=T(D_{1}\cap\mathfrak{H}(s/t,\upsilon(n))).
	\]
	Therefore we can parametrize $\mathfrak{D}\cap\mathfrak{H}(h_{M}(n)s/t,n)$
	as
	\[
	\psi(z)=\sqrt{1-(s/t)^{2}}Tz+\frac{s}{t}\upsilon(n),\,\,\,z\in\mathbb{S}^{d-1}\cap\upsilon(n)^{\perp}.
	\]
	Since $\mathbb{S}^{d-1}\cap\upsilon(n)^{\perp}$ is a $d-2$ dimensional sphere embedded in $\upsilon(n)^{\perp}$, we can apply the Area Formula (see, for instance, Section 3.3 in \cite{EvansGar92}) to deduce that
	\begin{equation}
		\mathcal{H}^{d-1} (\mathfrak{D}\cap\mathfrak{H}(h_{M}(n)s/t,n)) = \varphi(s/t)\mathcal{H}^{d-1}(T(D_{1}\cap\upsilon(n)^{\perp})). \label{eq:Hausdorfmmeasurehyper}
	\end{equation}
	Equation (\ref{eq:acG}) now follows easily from (\ref{semiclsedformkernellimitYalpha}) and (\ref{eq:Hausdorfmmeasurehyper}). Note that the former implies that, for all $t>0$, almost surely 
	\begin{equation}
		Y_{t}^{\alpha}=\int_{(0,t]\times N(A)}\int_{s}^{t}g(r,s,x,n)\mathrm{d}r\cdot\left[\Lambda_{\alpha}^{+}(\mathrm{d}s\mathrm{d}(x,n))-\Lambda_{\alpha}^{-}(\mathrm{d}s\mathrm{d}(x,n))\right].\label{eq:repY}
	\end{equation}
	Therefore, in order to finish the proof, we need to verify that the stochastic Fubini theorem can be applied for all $1<\alpha<2$, and for $\alpha=2$ if $d\geq3$.  Since each entry of $\Lambda_{\alpha}^{\pm}(\mathrm{d}s\mathrm{d}(x,n))$ are separable 1-dimensional strictly $\alpha$-stable L\'evy basis with control measure $\mu^{\pm}_{M,A}$, according to \cite{BasseBN11} (c.f. Lemma 3 in \cite{Sauri20}) we can swap the order of integration in
	(\ref{eq:repY}) whenever 
	\begin{equation}\label{Fubinibvres}
		\int_{0}^{t} \left(\int_{(0,r] \times N(A)}\lvert \lvert g(r,s,x,n) \lvert \rvert^{\alpha}\mu^{\pm}_{M,A}(\mathrm{d}s \mathrm{d}(x,n)) \right)^{1/\alpha} \mathrm{d}r<\infty.
	\end{equation} 
	This is easily obtained by noting that the inner integral equals to 
	\[  
	C\times r^{1-\alpha}\int_{0}^{1}\lvert\varphi^{\prime}(y)y\lvert^{\alpha}\mathrm{d}y,
	\]
	where, due to the continuity of $F$ and the compactness of the sphere, the constant	 $C$ equals
	\[  
	\int_{\partial A}\lvert\lvert(n_{A}(x)\cdot\mathbf{e}_{i})\mathbf{e}_{j}^{\prime}F(p_{0},x)\mathcal{H}^{d-1}(T(D_{1}\cap\upsilon(n_{A}(x))^{\perp}))\lvert\lvert^{\alpha}\lVert T^{\prime}n_{A}(x)\rVert\mathcal{H}^{d-1}(\mathrm{d}x)<\infty.
	\] Thus, \eqref{Fubinibvres} holds if and only if either $1<\alpha<2$ and $d\geq2$ or $\alpha=2$ and $d\geq 3$.
\end{proof}

\section{Proofs}

\sauri{Recall that we are assuming that $L$ is a homogeneous L\'evy basis. For the rest of this part, we will denote its characteristic triplet as $(\gamma,\Sigma,\nu)$ and its characteristic exponent as $\psi$}. The non-random positive constants will be denoted by the generic symbol $C>0$, and
they may change from line to line. If $A,B\subseteq\mathbb{R}^{d}$, we set
\[
A\oplus B=\{x+y:x\in A,y\in B\},
\]
and 
\[
A\ominus B=\{x\in\mathbb{R}^{d}:x-B\subseteq A\}.
\]
Let $D_{1}$ be the unit disk in $\mathbb{R}^{m}$. We will assume without loss of generality (w.l.o.g. from now on) that $M\subseteq D_{1}$. The following fact, which is a straightforward extension of Proposition 2.6 in \cite{RajputRosinski89}, will be constantly used in our proofs: If $f: \mathbb{R}^{d} \rightarrow \mathbb{R}^{N\times m}$ is integrable w.r.t. $L$, then the characteristic exponent of the infinitely divisible
random vector $\xi = \int_{\mathbb{R}^{d}} f(q) L(\mathrm{d}q) $ is given by 
\begin{equation}
	\mathcal{C}(z\ddagger\xi):=\log\mathbb{E}(\exp(\mathbf{i}z\cdot\xi))=\int_{\mathbb{R}^{d}}\psi(z^{\prime}f(q))\mathrm{d}q,\,\,\,z\in\mathbb{R}^{N}.\label{eq:cumulantfunction}
\end{equation}

Now, thanks to the L\'evy-It\^o decomposition for L\'evy basis (see \cite{Ped03}, c.f. \cite{Rosinski16}), we may and do assume that the random field defined in (\ref{mainfield}) admits the representation
\begin{equation}
	\begin{aligned}X(p)                 =&\int_{A+p}F(p,q)\gamma\mathrm{d}q+\int_{A+p}F(p,q)W(\mathrm{d}q)\\
		&+\int_{A+p}F(p,q)J_{S}(\mathrm{d}q)+\int_{A+p}F(p,q)J_{B}(\mathrm{d}q)\\
		=:& X^{(1)}(p)+X^{(2)}(p)+X^{(3)}(p)+X^{(4)}(p),
	\end{aligned}
	\label{eq:decompX}
\end{equation}
where $\gamma\in\mathbb{R}^{m}$, $W$, $J_{S}$ and $J_{B}$ are independent $\mathbb{R}^{m}$-valued homogeneous L\'evy basis with characteristic triples $(0,\Sigma,0)$, $(0,0,\nu\downharpoonright_{D_{1}})$ and $(0, 0, \nu\downharpoonright_{D_{1}^{c}}) $, respectively. Here, $\nu\downharpoonright_{D}$ denotes the restriction of $\nu$ to $D$. Moreover, $X^{(4)}$ can be written as 
\begin{equation}
	X^{(4)}(p) = \int_{A+p} \int_{\mathbb{R}^{m}} F(p,q)x \mathbf{1}_{D_{1}^{c}}(x) N(\mathrm{d}q\mathrm{d}x),
	\label{eq:eq:repX4}
\end{equation}
in which $N$ is a Poisson random measure on $\mathbb{R}^{d}\times\mathbb{R}^{m}$ independent of $(W,J_{S})$ and with intensity $\varrho=\mathrm{Leb}\otimes\nu$. Note that the latter integral is $\mathbb{P}$-a.s. well-defined in
the Lebesgue sense. If we also have that $\int_{\mathbb{R}^{m}}(1\land\| x\| )\nu(\mathrm{d}y)<\infty$, $X$ can be further decomposed as
\begin{equation}
	\begin{aligned}
		X(p) = & \int_{A+p}F(p,q)\gamma_{0}\mathrm{d}q+\int_{A+p}F(p,q)W(\mathrm{d}q)	
		+\int_{A+p}\int_{\mathbb{R}^{m}} F(p,q)xN(\mathrm{d}q\mathrm{d}x) \\
		=:& \tilde{X}^{(1)}(p)+X^{(2)}(p)+X^{(5)}(p),
	\end{aligned}
	\label{eq:decompXFV}
\end{equation}
where we have let $\gamma_{0} = \gamma-\int_{D_{1}} x \nu (\mathrm{d}x)$.

\subsection{Proof of Proposition \ref{welldefZ}}

\begin{proof}
	Fix $p_{0}\in\mathbb{R}^{d}$ and put $H(p,q):=F(p,q)\mathbf{1}_{A}(q-p)$. From (\ref{eq:decompX}), it is enough to show that
	\begin{equation}
		\| X^{(4)}(p_{0}+rty)\| \leq C\xi_{t,r},\label{eq:BJineq1}
	\end{equation}
	for some positive (finite a.s.) r.v. $\xi_{t,r}$, and that for, all $\beta\geq0$,
	\begin{equation}
		m_\beta=\mathbb{E}(\| X^{(i)}(p_{0}+rty)\| ^{\beta})\leq C,\,\,i=1,2,3;
		\label{eq:ExpX123}
	\end{equation}
	uniformly on $y\in M$. For simplicity and notational convenience, for the rest of the proof we set $p_{0}=0$. Thus, by letting $g(q,x):=\mathbf{1}_{A_{\oplus rt}}(q)\| x\| \mathbf{1}_{D_{1}^{c}}(x)$, we see that (\ref{eq:BJineq1}) holds if we set
	\[
	\xi_{t,r}:=\int_{A_{\oplus rt}}\int_{\mathbb{R}^{m}}g(q,x)N(\mathrm{d}q\mathrm{d}x),
	\]
	because $F$ is continuous and $A+rty\subseteq A\oplus rtM\subseteq A_{\oplus rt}$. The $\mathbb{P}$-a.s. finiteness of $\xi_{t,r}$ follows from the fact that $\int1\land\| g\| \mathrm{d}\varrho<\infty$ and Lemma 12.13 in \cite{Kallenberg02}. On the other hand, since $F$ is continuous and $A$ compact, it is clear that (\ref{eq:ExpX123}) holds for $i=1,2$. Now, in view that $H(p,\cdot)$ has compact support, $X^{(3)}$ has finite moments of all orders and from Corollary 1.2.6. in \cite{Turner11}, for every $\theta\geq2$,
	\[
	m_{\theta}\leq C\int_{A+p}\int_{D_{1}}(\| F(p_{0}+rty,q)x\| ^{2}\lor\| F(p_{0}+rty,q)x\| ^{\theta})\nu(\mathrm{d}x)\mathrm{d}q\leq C,
	\]
	once again by the continuity of $F$. This easily implies that (\ref{eq:ExpX123}) is also valid for $i=3$. 
\end{proof}

\subsection{Proof of Theorems \ref{mainthmStable} and \ref{mainthmFV}}

Before presenting the proof of Theorems \ref{mainthmStable} and \ref{mainthmFV}, let us make some
remarks and establish some basic results. First, since $A+p_{0}$ is gentle, w.l.o.g. we may and do assume that $p_{0}=0$. If this is not the case, replace $X$ by the ID field
\[
\tilde{X}(p) = \int_{A+p_{0}+p} F(p_{0}+p,q) L(\mathrm{d}q).
\]
For $t\in\mathbb{R}$ and $f\in L^{2}(\mathcal{H}^{d-1}\downharpoonright_{M})$, define
\begin{equation}
	Y_{r}(t,f)=\int_{M} (X(try)-X(0))\cdot f(y)\mathcal{H}^{d-1}(\mathrm{d}y).
	\label{Y_rdef}
\end{equation}
According to Lemma \ref{lemmaapproxJac} below, 
\begin{equation}\label{decompositionZ_r}
Z^{\phi,r}(t,f) = \sum_{i,j}D\phi(X(p_{0}))^{(i,j)}Y_{r}(t,f^{(i)}\mathbf{e}_{j}) + \mathrm{o}_{\mathbb{P}}(1).
\end{equation}
Thus, by the properties of stable convergence, it is enough to show that Theorems \ref{mainthmStable}
and \ref{mainthmFV} hold when we replace $Z^{\phi,r}$ by $Y_{r}$, i.e. when $\phi(x)=x$. Furthermore, in view that $f\mapsto Y_{t}^{r}(f)$ is linear, we only need to verify that the stated convergence holds for
\[
(Y_{r}(t_{1},f),\ldots,Y_{r}(t_{n},f)),
\]
for arbitrary $t_{0}:=0<t_{1}\leq\cdots\leq t_{n}$ and fixed $f\in L^{2}(\mathcal{H}^{d-1} \downharpoonright_{M})$.  Next, set $\theta_{1},\ldots,\theta_{n}\in\mathbb{R}$ and $\mathbf{T}=(t_{1}\leq\cdots\leq t_{n})$,  and let
\begin{equation}
	H(p,q):=F(p,q)\mathbf{1}_{A}(q-p), \label{HfunDef}
\end{equation}
as well as
\begin{align*}
	k_{l}^{(1)}(r,q)&:=\int_{M}f(y)^{\prime}[F(rt_{l}y,q)-F(0,q)]\mathcal{H}^{d-1}(\mathrm{d}y);\\k_{l}^{(2)}(r,q)&:=\int_{M}f(y)^{\prime}[H(rt_{l}y,q)-F(0,q)]\mathcal{H}^{d-1}(\mathrm{d}y);\\k_{l}^{(3)}(r,q)&:=\int_{M}f(y)^{\prime}H(rt_{l}y,q)\mathcal{H}^{d-1}(\mathrm{d}y).
\end{align*}
\sauri{By the Stochastic Fubini Theorem (see Lemma 3 in \cite{Sauri20} and its subsequent remark)}, we have that $\mathbb{P}$-a.s. 
\begin{equation}\label{decomp_Y_r}
\sum_{l=1}^{n}\theta_{l}Y^{r}(t_{l},f)=\Psi_{r}(\mathbf{T},f)+\Lambda_{r}(\mathbf{T},f)+\Phi_{r}(\mathbf{T},f),
\end{equation}
where 
\begin{equation}
	\begin{aligned}
		\Psi_{r}(\mathbf{T},f):=&\sum_{l=1}^{n}\theta_{l}\int_{A\cap(\cap_{k=1}^{n}A\ominus rt_{k}M)}k_{l}^{(1)}(r,q)L(\mathrm{d}q);\\\Lambda_{r}(\mathbf{T},f):=&\sum_{l=1}^{n}\theta_{l}\int_{A\backslash(\cap_{k=1}^{n}A\ominus rt_{k}M)}k_{l}^{(2)}(r,q)L(\mathrm{d}q);\\\Phi_{r}(\mathbf{T},f):=&\sum_{l=1}^{n}\theta_{l}\int_{(\cup_{k=1}^{n}A\oplus rt_{k}M)\backslash A}k_{l}^{(3)}(r,q)L(\mathrm{d}q).
	\end{aligned}
	\label{eq:fddfunctionalsdef}
\end{equation}
The following result shows that, in most cases, the leading terms are $\Lambda_{r}(\mathbf{T},f)$ and $\Phi_{r}(\mathbf{T},f)$.

\begin{theorem}\label{DivergenceThmClass}
	Let $\Psi_{r}$ be as in \eqref{eq:fddfunctionalsdef}. Suppose that, $f\in L^{2}(\mathcal{H}^{d-1}\downharpoonright_{M})$, $A$ is a compact gentle set, and $F$ is of class $C^{1}$. Then, as $r\downarrow0$,
	\[
	\frac{1}{r}\Psi_{r}(\mathbf{T},f)\overset{\mathbb{P}}{\rightarrow}\sum_{l=1}^{n}\theta_{l}t_{l}\int_{M}f(y)^{\prime}\nabla X(p_{0})y\mathcal{H}^{d-1}(\mathrm{d}y),
	\]
	where $\nabla X(p)^{(i,k)}=\sum_{j=1}^{m}\int_{A+p}\partial_{k}F^{(i,j)}(p,q)L^{(j)}(\mathrm{d}q).$ 
\end{theorem}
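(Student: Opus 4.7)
The plan is to Taylor-expand $F$ to first order in its first variable and then handle the resulting stochastic integrals via the cumulant identity \eqref{eq:cumulantfunction}. Since $F$ is of class $C^{1}$, for each $l=1,\ldots,n$ we write
\[
    F(rt_{l}y,q)-F(0,q) = rt_{l}\,D_{1}F(0,q)y + R_{l}(r,y,q),
\]
where $D_{1}F(p,q)y$ denotes the $d\times m$ matrix with $(i,j)$-entry $\sum_{k=1}^{d}\partial_{k}F^{(i,j)}(p,q)y^{(k)}$. Compactness of $A$ and $M$ together with the continuity of each $\partial_{k}F^{(i,j)}$ yields $r^{-1}\sup_{y\in M,\,q\in A}\|R_{l}(r,y,q)\|\to 0$ as $r\downarrow 0$. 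Substituting this into the definition of $k_{l}^{(1)}$ and applying Cauchy--Schwarz (using $f\in L^{2}(\mathcal{H}^{d-1}\downharpoonright_{M})$ and $\mathcal{H}^{d-1}(M)<\infty$), we obtain
\[
    \tfrac{1}{r}k_{l}^{(1)}(r,q) = t_{l}\,G(q) + \tilde R_{l}(r,q),\quad G(q):=\int_{M}f(y)'D_{1}F(0,q)y\,\mathcal{H}^{d-1}(\mathrm{d}y),
\]
with $\sup_{q\in A}\|\tilde R_{l}(r,q)\|\to 0$ and $G$ continuous on $A$.

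Set $A_{r}:=A\cap\bigcap_{k=1}^{n}(A\ominus rt_{k}M)$. The theorem follows once we prove (i) $\int_{A_{r}}G(q)\,L(\mathrm{d}q)\overset{\mathbb{P}}{\rightarrow}\int_{A}G(q)\,L(\mathrm{d}q)$ and (ii) $\int_{A_{r}}\tilde R_{l}(r,q)\,L(\mathrm{d}q)\overset{\mathbb{P}}{\rightarrow}0$ for each $l$. For (i), since $q-rt_{k}M\subseteq A$ whenever $\mathrm{dist}(q,\partial A)>rt_{k}\max_{y\in M}\|y\|$, one has
\[
    A\setminus A_{r}\subseteq\bigl\{q\in A:\mathrm{dist}(q,\partial A)\leq r\,t_{n}\max_{y\in M}\|y\|\bigr\},
\]
whose Lebesgue measure tends to $0$ by the gentleness of $A$, since $\mathrm{Leb}(\partial A)=0$. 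Because $G$ is bounded on $A$, the characteristic exponent $\psi(\theta G(\cdot))$ is bounded on $A$, so by \eqref{eq:cumulantfunction},
\[
    \mathbb{E}\exp\!\Bigl(\mathbf{i}\theta\int_{A\setminus A_{r}}G(q)\cdot L(\mathrm{d}q)\Bigr)=\exp\!\Bigl(\int_{A\setminus A_{r}}\psi(\theta G(q))\,\mathrm{d}q\Bigr)\longrightarrow 1,
\]
yielding (i). For (ii), the continuity of $\psi$ at $0$ together with the uniform vanishing of $\tilde R_{l}$ forces $\int_{A_{r}}\psi(\theta\tilde R_{l}(r,q))\,\mathrm{d}q\to 0$, again giving convergence in probability to zero.

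To identify the limit with the expression claimed, I exploit linearity of the stochastic integral: the $j$-th component of $G(q)$ factors as $\sum_{i,k}\beta_{i,k}\partial_{k}F^{(i,j)}(0,q)$ with $\beta_{i,k}:=\int_{M}f^{(i)}(y)y^{(k)}\,\mathcal{H}^{d-1}(\mathrm{d}y)$, a finite family of deterministic scalars (finite by Cauchy--Schwarz). Hence
\[
    \int_{A}G(q)\,L(\mathrm{d}q) = \sum_{i,k}\beta_{i,k}\sum_{j=1}^{m}\int_{A}\partial_{k}F^{(i,j)}(0,q)\,L^{(j)}(\mathrm{d}q) = \sum_{i,k}\beta_{i,k}\,\nabla X(0)^{(i,k)} = \int_{M}f(y)'\nabla X(0)y\,\mathcal{H}^{d-1}(\mathrm{d}y),
\]
which, combined with (i)--(ii) and the decomposition $\tfrac{1}{r}\Psi_{r}(\mathbf{T},f)=\sum_{l}\theta_{l}\int_{A_{r}}\tfrac{1}{r}k_{l}^{(1)}(r,q)L(\mathrm{d}q)$, yields the claim. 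The main obstacle is proving the two stochastic-integral limits (i)--(ii) in the absence of any moment hypothesis on $L$; both are overcome through \eqref{eq:cumulantfunction}, the continuity of $\psi$ at the origin, and gentleness of $A$ used to control the boundary strip $A\setminus A_{r}$.
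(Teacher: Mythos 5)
Your proof is correct and follows essentially the same route as the paper's: a first-order Taylor/mean-value expansion of $F$ in its first argument, the cumulant identity \eqref{eq:cumulantfunction} together with continuity of $\psi$ at the origin to upgrade convergence of the deterministic integrands to convergence in probability of the stochastic integrals, and the vanishing of $\mathrm{Leb}(A\setminus\bigcap_k A\ominus rt_kM)$ to discard the boundary strip. The only (immaterial) differences are that you organize the splitting as ``leading term on $A_r$ transferred to $A$ plus a uniformly small remainder'' rather than ``leading term on $A$ minus the strip contribution,'' and that you deduce the measure of the strip tends to zero directly from $\mathrm{Leb}(\partial A)=0$ instead of citing Kiderlen--Rataj.
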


\begin{proof}
	For $i=1,\ldots,d,j=1,\ldots,m$, let 
	\[
	R_{F}^{(i,j)}(t,q,y):=\frac{1}{r}(F^{(i,j)}(rty,q)-F^{(i,j)}(0,q))-t\sum_{k=1}^{d}\partial_{k}F^{(i,j)}(0,q)y^{(k)}.
	\]
	By the Mean-Value Theorem and the $C^{1}$ property of $F$, we have that 
	\begin{equation}
		\left\lvert\int_{M}f(y)^{(i)}R_{F}^{(i,j)}(t,q,y)\mathcal{H}^{d-1}(\mathrm{d}y)\right\lvert\leq C\int_{M}\| f(y)\| \| y\| \mathcal{H}^{d-1}(\mathrm{d}y)<\infty,\label{eq:boundsmoothpart}
	\end{equation}
	and, as $r\downarrow0$, $\int_{M}f(y)^{(i)}R_{F}^{(i,j)}(t,q,y)\mathcal{H}^{d-1}(\mathrm{d}y)\rightarrow0$,
	due to the Dominated Convergence Theorem. Applying this to (\ref{eq:cumulantfunction}) give us that, for all $t\in\mathbb{R}$,
	\[
	\frac{1}{r}\int_{A}\int_{M}f(y)^{\prime}[F(rty,q)-F(0,q)]\mathcal{H}^{d-1}(\mathrm{d}y)L(\mathrm{d}q)\overset{\mathbb{P}}{\rightarrow}t\int_{M}f(y)^{\prime}DX(0)y\mathcal{H}^{d-1}(\mathrm{d}y).
	\]
	Hence, it is left to show that, for all $t\in\mathbb{R}$,
	\[
	\int_{A\backslash(\cap_{k=0}^{n}A\ominus rt_{k}M)}\int_{M}f(y)^{\prime}\left[\frac{F(rty,q)-F(0,q)}{r}\right]\mathcal{H}^{d-1}(\mathrm{d}y)L(\mathrm{d}q)\overset{\mathbb{P}}{\rightarrow}0.
	\]
	This is easily obtained by noticing that, by virtue of (\ref{eq:cumulantfunction}), the characteristic exponent of the latter integral equals
	\[
	\int_{A\backslash(\cap_{k=0}^{n}A\ominus rt_{k}M)}\psi\left[z\int_{M}f(y)^{\prime}\left[\frac{F(rty,q)-F(0,q)}{r}\right]\mathcal{H}^{d-1}(\mathrm{d}y)\right]\mathrm{d}q,\,\,\,z\in\mathbb{R},
	\]
	and, due to (\ref{eq:boundsmoothpart}) and the continuity of $\psi$, it is bounded up to a constant by 
	\[
	\sum_{k=1}^{n}\mathrm{Leb}(A\backslash A\ominus rt_{k}M)\rightarrow0,\,\,\text{ as }r\downarrow0,
	\]
	thanks to Theorem 1 in \cite{kiderlenRataj06}.
\end{proof}
We are now ready to present a proof of our main results.
\begin{proof}[Proof of Theorem \ref{mainthmStable}:]
	First recall that from our discussion above (see \eqref{decompositionZ_r}), we only need to concentrate on the case when $\phi(x)=x$. Furthermore, from \eqref{decomp_Y_r}, the previous theorem and our assumptions
	\begin{equation}
	\frac{1}{r^{1/\alpha}}\sum_{l=1}^{n}\theta_{l}Y^{r}(t_{l},f)  =\frac{1}{r^{1/\alpha}}\Psi_{r}(\mathbf{T},f)+\frac{1}{r^{1/\alpha}}\Lambda_{r}(\mathbf{T},f)+\mathrm{o}_{\mathbb{P}}(1),
		\label{eq:thm1concl}
	\end{equation}
	for every $1<\alpha\leq2$ .  Hence, it is enough to study the limit behaviour of the functionals $\Lambda_{r}(\mathbf{T},f)$ and $\Phi_{r}(\mathbf{T},f)$ defined in \eqref{eq:fddfunctionalsdef}. More specifically, we will show that under our assumptions, it holds
	 \begin{equation}
		\frac{1}{r^{1/\alpha}}\Lambda_{r}(\mathbf{T},f)\overset{\mathcal{F}\text{-}d}{\rightarrow}\sum_{l=1}^{n}\theta_{l}Y^{\alpha,+}(t_{l},f);\quad\frac{1}{r^{1/\alpha}}\Phi_{r}(\mathbf{T},f)\overset{\mathcal{F}\text{-}d}{\rightarrow}\sum_{l=1}^{n}\theta_{l}Y^{\alpha,-}(t_{l},f),
		\label{eq:fdconbstable}
	\end{equation} 
	where (recall the definition of $\Lambda_{\alpha}^{\pm}$ introduced in Subsection \ref{mainresults_subsec})
		\[
	Y^{\alpha,\pm}(t,f):=\pm\int_{(0,t]\times N(A)}G(t,f,s,x,n)\cdot\Lambda_{\alpha}^{\pm}(\mathrm{d}s\mathrm{d}(x,n)),\quad 1<\alpha\leq2,
	\]
	The proof then will be completed by observing that:
	\begin{enumerate}
	\item $\Lambda_{r}(\mathbf{T},f)$ and $\Phi_{r}(\mathbf{T},f)$ are independent.
	\item $Y^{\alpha}=Y^{\alpha,+}-Y^{\alpha,-}$.
	\end{enumerate}	
	
	Let us now verify that \eqref{eq:fdconbstable} is satisfied if \hyperref[stableattractassump]{Assumption ${\bf{A}}_\alpha$} holds for some $1<\alpha\leq2$ and $A$ is a compact gentle set.  By arguing as in the proof of Theorem 6 in \cite{Sauri20}, it is sufficient to check that the convergence in (\ref{eq:fdconbstable}) holds only weakly. For the rest of the proof, we restrict our attention to $\Lambda_{r}(\mathbf{T},f)$ since the arguments used in this case can be easily extrapolated to $\Phi_{r}(\mathbf{T},f)$.
	
	Let $\psi_{\alpha}$ be as in (\ref{chfnctstrictlystabe}) but $\lambda$ is replaced by $\bar{\lambda}(\mathrm{d}u)=\alpha K(u)\lambda(\mathrm{d}u)$ when $1<\alpha<2$, otherwise we set	$	\psi_{\alpha}(w)=-\frac{1}{2}w^{\prime}\Sigma w $. From Lemma \ref{fddapproximbystable} and Remark \ref{remarkapproxalphastable} below, for all $z\in\mathbb{R}$, it holds that
	\begin{equation}\label{chfnctapprox_Lamba_r}
	\mathcal{C}(z\ddagger\frac{1}{r^{1/\alpha}}\Lambda_{r}(\mathbf{T},f))=\frac{1}{r}\int_{A\backslash(\cap_{k=1}^{n}A\ominus rt_{k}M)}\psi_{\alpha}(zI_{\mathbf{T},f}^{r}(q))\mathrm{d}q+\mathrm{o}(1),
	\end{equation}
where
	\begin{equation}
		I_{\mathbf{T},f}^{r}(q):=\sum_{l=1}^{n}\theta_{l}\int_{M}f(y)^{\prime}[H(rt_{l}y,q)-F(0,q)]\mathcal{H}^{d-1}(\mathrm{d}y).
		\label{eq:Intdef}
	\end{equation}
	Now set (see Section \ref{sec:Preliminaries-and-basic}) $u\equiv n_{A}(x)$, $\delta_{+}=\delta(x,u)$ and $\delta_{-}=\delta(x,-u)$. By the continuity of $F$ and $\psi_{\alpha}$, $\psi_{\alpha}(zI_{\mathbf{T},f}^{r}(\cdot))$ is locally bounded. Therefore, we can apply Proposition 4 (Steiner's formula for gentle sets) along with the arguments of Theorem 1 in \cite{kiderlenRataj06} (c.f. \cite{HugGuntWel04}) to conclude that
	\[
	\frac{1}{r}\int_{A\backslash(\cap_{k=1}^{n}A\ominus rt_{k}M)}\psi_{\alpha}(zI_{\mathbf{T},f}^{r}(q))\mathrm{d}q=\frac{1}{r}\int_{\partial A}g_{r}(x,z)\mathcal{H}^{d-1}(\mathrm{d}x)+\mathrm{o}(1),
	\]
	in which we have let 
	\[
	g_{r}(x,z):=\int_{-\delta_{-}}^{0}\psi_{\alpha}(zI_{\mathbf{T},f}^{r}(x+su))\mathbf{1}_{(\cap_{k=1}^{n}A\ominus rt_{k}M)^{c}}(x+su)\mathrm{d}s.
	\]
	Next, we focus on showing that for ${\cal H}^{d-1}$-a.a. $x\in\partial A$
	\begin{equation}
		\frac{1}{r}g_{r}(x,z)\rightarrow\int_{0}^{t_{n}}\psi_{\alpha}(-z\sum_{l=1}^{n}\theta_{l}G(t_{l},f,s,x,-u))\mathrm{d}sh_{M}(-u)^{+},\,\,r\downarrow0.
		\label{eq:convchfncrt}
	\end{equation}
	To do this, we first note that since $A$ is gentle, for ${\cal H}^{d-1}$-a.a. $x\in\partial A$, $\delta_{+},\delta_{-}>0$ and 
	\begin{equation}
		\mathring{B}_{+}:=\mathring{B}_{\delta_{+}}(x+\delta_{+}u)\subset A^{c};\,\,\,B_{-}:=B_{\delta_{-}}(x-\delta_{-}u)\subseteq A;\label{eq:ballsoscA}
	\end{equation}
	where $B_{R}(p)$ is a ball of radius $R>0$ and centre $p$. Using this and the relation $(A\ominus rtM)^{c}=A^{c}\oplus rtM$, we deduce that for every $t>0$ it holds that 
	\begin{equation}
		\mathbf{1}_{(A\ominus rtM)^{c}}(q)=
		\begin{cases}
			1 & \text{if }q\in\mathring{B}_{+}+rtp\\
			0 & \text{if }q-rtM\subseteq B_{-}
		\end{cases},
		\label{eq:indicatorAminusrtM1}
	\end{equation}
	for some $p\in M$. Fix $x\in\partial A$ satisfying (\ref{eq:ballsoscA}) and choose $p_{-}(x)\in M$ such that $-p_{-}\cdot u=h_{M}(-u)$. From (\ref{eq:indicatorAminusrtM1}), we infer that for $r$ small enough and $\delta_{+}>s>-\delta_{-}$,
	\begin{equation}
		\mathbf{1}_{(\cap_{k=1}^{n}A\ominus rt_{k}M)^{c}}(q)=
		\begin{cases}
			1 & \text{if }s>\mathrm{o}_{+}(rt_{n})-rt_{n}h_{M}(-u)\\
			0 & \text{if }s<-\max_{k} \{rt_{k}h_{M} (-u) + \mathrm{o}_{-} (rt_{k})\}
		\end{cases},
		\label{eq:indicatorAminrtM}
	\end{equation}
	where $q=x+su$ and $\mathrm{o}_{\pm}(r)=(\delta_{\pm}-\sqrt{\delta_{\pm}^{2}-r^{2}})=\mathrm{o}(r)$. Hence 
	\begin{align}
		\frac{1}{r}g_{r}(x,z)= & \int_{-t_{n}h_{M}(-u)}^{0}\psi_{\alpha}(zI_{\mathbf{T},f}^{r}(x+rsu))\mathrm{d}s\mathbf{1}_{h_{M}(-u)>0}+\mathrm{o}(1),\label{eq:intrelation1}
	\end{align}
	where we further used that $\psi_{\alpha}(zI_{\mathbf{T},f}^{r}(\cdot))$ is locally bounded and made a change of variables. Let us now compute the limit of $I_{\mathbf{T},f}^{r}(x+rsu)$. To do this, thanks to Theorem 10.10 in \cite{Mattila99}, we may and do assume that 
	\begin{equation}
		\mathcal{H}^{d-1}\left(M\cap\{y:ty\cdot u=s\}\right)=0.
		\label{measureinter-1}
	\end{equation}
	Reasoning as in (\ref{eq:indicatorAminusrtM1}) and (\ref{eq:indicatorAminrtM}), we conclude that, for all $t\geq0$ and $q_{r}(t)=x+sru-rty$, 
	\[
	\mathbf{1}_{A}(q_{r}(t))=
	\begin{cases}
		1 & \text{if }ty\cdot u>s+\mathrm{o}_{-}(tr)/r\\
		0 & \text{if }ty\cdot u<s-\mathrm{o}_{+}(tr)/r
	\end{cases}.
	\]
	Thus,
	\begin{align*}
		I_{\mathbf{T},f}^{r}(q)=- & \sum_{l=1}^{n}\theta_{l}\int_{M}f(y)^{\prime}\mathbf{1}_{t_{l}y\cdot u<s-\mathrm{o}_{+}(rt_{l})/r}\mathcal{H}^{d-1}(\mathrm{d}y)F(0,q_{r}(t_l))+\mathrm{o}(1)\\
		\rightarrow & -\sum_{l=1}^{n}\theta_{l}\int_{M}f(y)^{\prime}\mathbf{1}_{(-\infty,s]}(t_{l}y\cdot u)\mathcal{H}^{d-1}(\mathrm{d}y)F(0,x),
	\end{align*}
	where we also used (\ref{measureinter-1}) and the fact that $\frac{\mathrm{o}_{\pm}(r)}{r} \downarrow0$ as $r \downarrow0$. The convergence in (\ref{eq:convchfncrt}) follows now by applying this to (\ref{eq:intrelation1}) and a simple
	change of variables. Finally, in view that
	\[
	\lvert g_{r}(x,z)\lvert\leq C\sum_{k=1}^{n}\int_{-\delta_{+}}^{0}\mathbf{1}_{A_{\oplus rt_{k}}^{*}}(x+sn_{A}(x))\mathrm{d}s\leq Cr,
	\]
	the limits in (\ref{eq:fdconbstable}) can now be easily obtained by the Dominated Convergence Theorem, (\ref{eq:convchfncrt}), and the fact that $\mathbf{1}_{[sh_{M}(u),+\infty)}(ty\cdot u)=0$ for $s>t.$
\end{proof}

\begin{proof}[Proof of Theorem \ref{mainthmFV}:] Observe first that in this situation 
		\begin{equation}
		\begin{aligned}
			\frac{1}{r}\sum_{l=1}^{n}\theta_{l}Y^{r}(t_{l},f)= & \sum_{l=1}^{n}\theta_{l}t_{l}\int_{M}f(y)^{\prime}\nabla X(p_{0})y\mathcal{H}^{d-1}(\mathrm{d}y)+\mathrm{o}_{\mathbb{P}}(1)\\
	& +\frac{1}{r}\Psi_{r}(\mathbf{T},f)+\frac{1}{r}\Lambda_{r}(\mathbf{T},f),
	\end{aligned}
		\label{eq:classical}
	\end{equation}
 due to  Theorem \ref{DivergenceThmClass}. Note also that thanks to Lemma \ref{fddapproximbystable}, \eqref{chfnctapprox_Lamba_r} also holds if we replace $\psi_{\alpha}$ by $\tilde{\psi}(w)=	\mathbf{i}\gamma_{0}\cdot w$. Therefore, by setting 	
 \[
 Y^{1,\pm}(t,f):=\pm\int_{(0,t]\times N(A)}G(t,f,s,x,n)\cdot\gamma_{0}\mu^{\pm}_{M,A}(\mathrm{d}s\mathrm{d}(x,n)),
 \] 
 and arguing as in the proof of Theorem \ref{mainthmStable}, we deduce that 
 	\[	 \frac{1}{r}\Lambda_{r}(\mathbf{T},f)\overset{\mathbb{P}}{\rightarrow} \sum_{l=1}^{n} \theta_{l}Y^{1,-}(t_{l},f).
 	\]
 	Similarly we have that 
 		\[	 \frac{1}{r}\Psi_{r}(\mathbf{T},f)\overset{\mathbb{P}}{\rightarrow} \sum_{l=1}^{n} \theta_{l}Y^{1,+}(t_{l},f).
 	\]
 Therefore, in order to finish the proof, we only need to check that sum of $Y^{1,+}(t,f)$ and $Y^{1,-}(t,f)$ equals
 \[	     
 t\sum_{i,k,m} \int_{M}\int_{A+p_{0}}f(y)^{(i)} \partial_{k+d}F^{(i,j)} (p_{0},x) \gamma_{0}^{(i)}y^{(k)} \mathrm{d}q \mathcal{H}^{d-1}(\mathrm{d}y).
 \]
 This relation is obtained easily by applying Fubini's Theorem, using the identity	\[
 \int_{0}^{t}G(t,f,s,x,\pm u)\mathrm{d}s=\frac{t}{h_{M}(u)}\int_{M}f(y)^{\prime}(\pm y\cdot u)^{+}\mathcal{H}^{d-1}(\mathrm{d}y)F(p_{0},x),\quad h_{M}(\pm u)>0,
 \]
and by the classical Divergence Theorem. 	\end{proof}

\subsection{Two fundamental approximations }

In this subsection, we show that in the proof of Theorems \ref{mainthmStable} and \ref{mainthmFV}, it is enough to concentrate on the case when $\phi$ is the identity function and $L$ a strictly $\alpha$-stable L\'evy basis. 

\begin{lemma}\label{lemmaapproxJac}
	Let $X$ be as in (\ref{mainfield}), with $A$ a compact gentle set and $Y_r$ as in \eqref{Y_rdef}. Then, for all $\phi\in C_{\beta}^{2}$, $f\in L^{2}(\mathcal{H}^{d-1}\downharpoonright_{M}),$ and $1<\alpha\leq2$, we have that, as $r\downarrow0$, 
	\begin{equation}
		\frac{1}{r^{1/\alpha}}\left\arrowvert Z^{\phi,r}(t,f)-\sum_{i,j}D\phi(X(p_{0}))^{(i,j)}Y_{r}(t,f^{(i)}\mathbf{e}_{j})\right\arrowvert\overset{\mathbb{P}}{\rightarrow}0.
		\label{eq:approxtaylor}
	\end{equation}
	If in addition $\Sigma=0$ and $\int_{\mathbb{R}^{m}}(1\land\| x\| )\nu(\mathrm{d}x)<\infty$, then (\ref{eq:approxtaylor}) also holds for $\alpha=1$.
\end{lemma}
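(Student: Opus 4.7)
The plan is to perform a second-order Taylor expansion of $\phi$ around $X(p_{0})$. Writing $\Delta_{r}(y):=X(p_{0}+rty)-X(p_{0})$ and using the integral form of the remainder, one obtains
\[
    \phi(X(p_{0}+rty))-\phi(X(p_{0}))=D\phi(X(p_{0}))\Delta_{r}(y)+R_{r}(y),
\]
with $\|R_{r}(y)\|\leq C\|\Delta_{r}(y)\|^{2}(1+\|X(p_{0})\|^{\beta-2}+\|\Delta_{r}(y)\|^{\beta-2})$ by the polynomial growth of $D^{2}\phi$ (with the convention that the exponents $\beta-2$ are replaced by $0$ when $\beta\leq2$). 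Inner-producting against $f$ and integrating over $M$, the linear term equals exactly $\sum_{i,j}D\phi(X(p_{0}))^{(i,j)}Y_{r}(t,f^{(i)}\mathbf{e}_{j})$, so the lemma reduces to proving
\[
    r^{-1/\alpha}\int_{M}\|R_{r}(y)\|\,\|f(y)\|\,\mathcal{H}^{d-1}(\mathrm{d}y)\overset{\mathbb{P}}{\rightarrow}0.
\]

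After applying the Cauchy-Schwarz inequality on $M$ (noting that $\mathcal{H}^{d-1}(M)<\infty$ and that $\|X(p_{0})\|<\infty$ a.s.\ by Proposition \ref{welldefZ}), matters reduce to obtaining the rate bound $\sup_{y\in M}\|\Delta_{r}(y)\|=\mathrm{O}_{\mathbb{P}}(r^{1/\alpha})$ together with tightness of $\sup_{y\in M}(1+\|X(p_{0}+rty)\|^{\beta-2})$ as $r\downarrow0$. The tightness statement is routine: it follows from the continuity of $p\mapsto X(p)$ and the uniform moment bounds already obtained in the proof of Proposition \ref{welldefZ}. The rate bound is the technical heart of the argument.

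To prove the rate bound, the idea is to exploit the L\'evy-It\^o decomposition (\ref{eq:decompX}), controlling each summand separately. The drift piece $X^{(1)}$ contributes deterministic increments of order $r$. The Gaussian piece $X^{(2)}$ yields $L^{2}$-increments of order $r^{1/2}\leq r^{1/\alpha}$ via a direct variance computation through (\ref{eq:cumulantfunction}). For the small-jump part $X^{(3)}$ under \hyperref[stableattractassump]{Assumption $\mathbf{A}_{\alpha}$}, a Rosinski-type moment inequality combined with (\ref{eq:Assumptionalphastablepart1})--(\ref{eq:eq:Assumptionalphastablepart2}) yields $\mathbb{E}\|\Delta_{r}^{(3)}(y)\|^{p}\leq Cr^{p/\alpha}$ for some $p\in(\alpha,2]$; uniformity in $y\in M$ is then obtained by a chaining/maximal argument on the compact manifold $M$. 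Finally, the big-jump part $X^{(4)}$ is controlled geometrically: by (\ref{eq:eq:repX4}), $\Delta_{r}^{(4)}(y)$ is driven by Poisson atoms lying in the symmetric difference $(A+p_{0}+rty)\triangle(A+p_{0})$, which is contained in a shell of Lebesgue measure $\mathrm{O}(r)$ (by Theorem 1 of \cite{kiderlenRataj06}, as in the proof of Theorem \ref{DivergenceThmClass}).

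The main obstacle will be securing a genuinely uniform-in-$y$ bound for the large-jump component when $L$ has only fractional moments ($1<\alpha<2$). The natural workaround is truncation: on the event $E_{r}$ that no Poisson atom of size exceeding a slowly growing threshold lies in the thin shell---an event of probability tending to one---the truncated moment is controlled by a Markov/union estimate; on the complementary event, whose probability is $\mathrm{o}(1)$, one uses the crude domination $\|\Delta_{r}(y)\|\leq\|X(p_{0}+rty)\|+\|X(p_{0})\|$ combined with the aforementioned tightness. The finite-variation case $\alpha=1$ follows the same scheme but is substantially simpler, since then $\Delta_{r}$ is genuinely of order $r$ (drift plus absolutely convergent jump series) and the quadratic remainder is directly of order $r^{2}=\mathrm{o}(r)$ in probability.
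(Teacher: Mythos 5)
Your skeleton (second--order Taylor expansion of $\phi$ around $X(p_{0})$, reduction to the quadratic remainder, the L\'evy--It\^o decomposition \eqref{eq:decompX}, and the thin--shell estimate $\mathrm{Leb}\big((A+rty)\triangle A\big)=\mathrm{O}(r)$ from Theorem 1 of \cite{kiderlenRataj06}) coincides with the paper's. The gap sits exactly at the step you call the technical heart: the reduction to $\sup_{y\in M}\|\Delta_{r}(y)\|=\mathrm{O}_{\mathbb{P}}(r^{1/\alpha})$, with $\Delta_{r}(y):=X(p_{0}+rty)-X(p_{0})$, and its proposed proof by chaining. First, this uniform bound is an unnecessary strengthening: the remainder enters only through $\int_{M}\|R_{r}(y)\|\,\|f(y)\|\,\mathcal{H}^{d-1}(\mathrm{d}y)$, so by Tonelli it suffices to have the \emph{pointwise} moment bounds $\mathbb{E}\|\Delta_{r}^{(i)}(y)\|^{\beta_{0}}\leq Cr$, uniformly in $y\in M$, for $\beta_{0}=2\vee\beta$ and $i=1,2,3$; these follow from \eqref{eq:cumulantfunction}, the moment inequality in \cite{Turner11}, and the $\mathrm{O}(r)$ volume of the symmetric differences, and they already give $r^{-1/\alpha}\cdot\mathrm{O}(r)=\mathrm{O}(r^{1-1/\alpha})\to 0$ since $\alpha>1$. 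No fine structure from Assumption ${\bf A}_{\alpha}$ is needed for the remainder, and this is precisely the route the paper takes. Second, and more seriously, the chaining argument you invoke for the small--jump part would not go through: the field $y\mapsto X^{(3)}(p_{0}+rty)$ is \emph{not} continuous in $y$ (Poisson atoms enter and leave the translated ambit set), and the best available spatial increment estimate is $\mathbb{E}\|\Delta_{r}^{(3)}(y)-\Delta_{r}^{(3)}(y')\|^{p}\leq C\,r\,\|y-y'\|$ for every $p\geq2$, i.e.\ with H\"older exponent $1$ in $\|y-y'\|$ no matter how large $p$ is. A Kolmogorov--type criterion over the $(d-1)$--dimensional manifold $M$ requires an exponent strictly larger than $d-1$, so this fails already for $d=2$; one would instead need a maximal inequality for Poisson integrals indexed by the set class $\{(A+rty)\triangle A:\,y\in M\}$, which your sketch does not supply. (Your intermediate claim $\mathbb{E}\|\Delta_{r}^{(3)}(y)\|^{p}\leq Cr^{p/\alpha}$ for some $p\in(\alpha,2]$ is also not what the estimates deliver; the correct order is $Cr$, linear in the shell volume.)

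Two further points. Your treatment of the big--jump part is essentially the paper's: $\Delta_{r}^{(4)}(y)$ is bounded by $r\chi$ plus a term $\xi_{r}$ that is nonzero only on the event that an atom with $\|x\|>1$ lies in the $\mathrm{O}(r)$--volume shell, an event of probability $\mathrm{O}(r)$; no size truncation is needed. In the finite--variation case, your assertion that ``$\Delta_{r}$ is genuinely of order $r$'' is false pathwise: an atom sitting near $\partial(A+p_{0})$ produces an $\mathrm{O}(1)$ change in $X^{(5)}$ when it enters or leaves the translated set. The conclusion is rescued only because this happens with probability $\mathrm{O}(r)$ --- the same shell/event argument again, not absolute convergence of the jump series.
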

\begin{proof} 
	By the Mean-Value Theorem, for all $\phi\in C_{\beta}^{2}$, it holds that
	\[
	\| \phi(x)-\phi(y)-D\phi(y)(x-y)\| \leq C(1+\| y\| ^{\beta-2})(\lVert x-y\| ^{2}\lor\| x-y\| ^{\beta}),\,x,y\in\mathbb{R}^{d}.
	\]
	As a result of this, the norm of 
	\[ 	
	\phi(X(p_{0}+rty))-\phi(X(p_{0}))-D\phi(X(p_{0}))[X(p_{0}+rty)-X(p_{0})], 
	\]
	is bounded up to a random constant (that only depends on $X(p_0)$, $f$, and $M$) by 
	\[
	\| X(p_{0}+rty)-X(p_{0})\| ^{2}\lor\| X(p_{0}+rty)-X(p_{0})\| ^{\beta}.
	\]
	In consequence, it is enough to show that, for all $i=1,2,3,4$ (remember the decomposition \eqref{eq:decompX}), as $r\downarrow0$,
	\begin{equation}
		\frac{1}{r^{1/\alpha}}\int_{M}\| f(y)\| \| X^{(i)}(p_{0}+rty)-X^{(i)}(p_{0})\| ^{\beta_{0}}\mathcal{H}^{d-1}(\mathrm{d}y)\overset{\mathbb{P}}{\rightarrow}0, \,\, \beta_{0}:=2\lor\beta.
		\label{eq:Tayloapprox}
	\end{equation}
	For simplicity and notational convenience, for the rest of the proof, we set $p_{0}=0$. Now, for $i=1,\ldots,4$, we write 
	\begin{align*}
		X^{(i)}(rty)-X^{(i)}(0) & =\int_{A\cap A\ominus\{rty\}}(F(rty,q)-F(0,q))L_{i}(\mathrm{d}q)\\
		&+\int_{A\oplus\{rty\}\backslash A}F(rty,q)L_{i}(\mathrm{d}q)\\
		& +\int_{A\backslash A\ominus\{rty\}}(H(rty,q)-F(0,q))L_{i}(\mathrm{d}q)
	\end{align*}
	where $L_{i}$ is the L\'evy basis associated to $X^{(i)}$ via (\ref{eq:decompX}) and $H$ as in \eqref{HfunDef}. Using the fact that $A\oplus rtM\subseteq A_{\oplus rt}$ and $(A\ominus rtM)^{c}\subseteq A^{c}\oplus rtD_{1}$, as well as the $C^{1}$ property of $F$, we obtain that
	\[
	\| X^{(1)}(rty)-X^{(1)}(0)\| \leq C(r+\mathrm{Leb}(A_{\oplus rt}\backslash A)+\mathrm{Leb}(A\backslash A\ominus rtD_{1})).
	\]
	Similarly, by Gaussianity and Corollary 1.2.6. in \cite{Turner11}, we infer that
	\[
	\mathbb{E}(\| X^{(2)}(rty)-X^{(2)}(0)\| ^{\beta_{0}})\leq C(r^{2}+\mathrm{Leb}(A_{\oplus rt}\backslash A)+\mathrm{Leb}(A\backslash A\ominus rtD_{1})){}^{\beta_{0}/2}.
	\]
	and 
	\[
	\mathbb{E}\left(\| X^{(3)}(rty)-X^{(3)}(0)\| ^{\beta_{0}}\right)\leq C(r^{\beta_{0}}+\mathrm{Leb}(A_{\oplus rt}\backslash A)+\mathrm{Leb}(A\backslash A\ominus rtD_{1})), 
	\]
	respectively. An application of the previous estimates and Theorem 1 in \cite{kiderlenRataj06} show that (\ref{eq:Tayloapprox}) is valid for $i=1,2,3$. On the other hand, using (\ref{eq:eq:repX4}) and arguing as above, we obtain that uniformly on $y\in M$, $\mathbb{P}$-a.s.
	\begin{equation}
		\begin{aligned}
			\| X^{(4)}(rty)-X^{(4)}(0)\| ^{\beta_{0}} & \leq C\left(r\chi+\xi_{r}\right)^{\beta_{0}}
		\end{aligned}
		\label{eq:boundX4}
	\end{equation}
	where $\chi:=\int_{A}\int_{\mathbb{R}^{m}}\| x\| \mathbf{1}_{D_{1}^{c}}(x)N(\mathrm{d}q\mathrm{d}x)$, and 
	\[
	\xi_{r}:=\int_{(A_{\oplus rt}\backslash A)\cup(A\backslash A\ominus rtD_{1})}\int_{\mathbb{R}^{m}}\| x\| \mathbf{1}_{D_{1}^{c}}(x)N(\mathrm{d}q\mathrm{d}x).
	\]
	Therefore, in order to see that (\ref{eq:Tayloapprox}) is also satisfied for $i=4$, we only need to check that 
	\begin{equation}
		r^{-1/\alpha\beta_{0}}\xi_{r}\overset{\mathbb{P}}{\rightarrow}0.\label{eq:xirdef}
	\end{equation}
	The previous relation is easily obtained by noting that $r^{-1/\alpha\beta_{0}}\xi_{r}$ is infinitely divisible with characteristic exponent 
	\[
	\mathrm{Leb}((A_{\oplus rt}\backslash A)\cup(A\backslash A\ominus rtD_{1}))\int_{\| x\| >1}(e^{\mathbf{i}zr^{-1/\alpha\beta_{0}}\| x\| }-1)\nu(\mathrm{d}x),\,\,\,z\in\mathbb{R},
	\]
	which is bounded up to a constant by $r$ (due to Theorem 1 in \cite{kiderlenRataj06}).
	
	Now suppose that $\Sigma=0$ and $\int_{\mathbb{R}^{m}}(1\land\| x\| )\nu(\mathrm{d}x)<\infty$, in such a way that (\ref{eq:decompXFV}) takes the form
	\[
	X(p)=\tilde{X}^{(1)}(p)+X^{(5)}(p)=\int_{A+p}F(p,q)\gamma_{0}\mathrm{d}q+\int_{\mathbb{R}^{d}}\int_{\mathbb{R}^{m}}H(p,q)xN(\mathrm{d}q\mathrm{d}x).
	\]
	Exactly as above, we deduce that
	\[
	\frac{1}{r}\int_{M}\| f(y)\| \| \tilde{X}^{(1)}(rty)-\tilde{X}^{(1)}(0)\| ^{\beta_{0}}\mathcal{H}^{d-1}(\mathrm{d}y)\rightarrow0.
	\]
	Moreover, (\ref{eq:boundX4}) remains valid for $X^{(5)}$ if we replace $\mathbf{1}_{D_{1}^{c}}$ by $1$ in the definition of $\chi$ and $\xi_{r}$. Therefore, in order to finish the proof, we need to verify that (\ref{eq:xirdef}) holds for $\alpha=1$ under this new definition of $\xi_{r}$. To see that this is the case, first note that the characteristic exponent $r^{-1/\beta_{0}}\xi_{r}$ now equals 
	\[
	\mathrm{Leb}((A_{\oplus rt}\backslash A)\cup(A\backslash A\ominus rtD_{1}))\int_{\mathbb{R}^{m}}(e^{\mathbf{i}zr^{-1/\beta_{0}}\| x\| }-1)\nu(\mathrm{d}x).
	\]
	Invoking once again Theorem 1 in \cite{kiderlenRataj06}, we infer that the previous quantity is bounded up to a constant by 
	\[
	r\int_{\mathbb{R}^{m}}(1\land r^{-1/\beta_{0}}\| zx\| )\nu(\mathrm{d}x)\leq Cr(1\lor r^{-1/\beta_{0}}\lvert z\lvert)\rightarrow0,\,\,\text{as }r\downarrow0,
	\]
	because $\beta_{0}\geq2$. This concludes the proof.
\end{proof}

Below, $\psi_{2}(w)=-\frac{1}{2}w^{\prime}\Sigma w$ and, for $1<\alpha<2$, $\psi_{\alpha}$ is given by (\ref{chfnctstrictlystabe}), where $\lambda$
is replaced by $\bar{\lambda}(\mathrm{d}u)=\alpha K(u)\lambda(\mathrm{d}u)$.

\begin{lemma}\label{fddapproximbystable}
	Let $\psi$ be the characteristic exponent of a homogeneous Lévy basis with triplet $(\gamma,\Sigma,\nu)$.
	Then, we have the following
	\begin{enumerate}
		\item If  \hyperref[stableattractassump]{Assumption ${\bf{A}}_\alpha$} holds for some $1<\alpha\leq2$, then, as $r\downarrow0$,
		\begin{equation}
			r\psi(r^{-1/\alpha}w)\rightarrow\psi_{\alpha}(w).
			\label{eq:conchfnct}
		\end{equation}
		\item When $\Sigma=0$ and $\int_{\mathbb{R}^{m}}(1\land\| x\| )\nu(\mathrm{d}x)<\infty$, as $r\downarrow0$,
		\[	 
		r \psi(r^{-1/\alpha}w)\rightarrow\mathbf{i} \gamma_{0}\cdot w.
		\]
	\end{enumerate}
\end{lemma}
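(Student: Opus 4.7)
The idea is to expand $r\psi(r^{-1/\alpha}w)$ using the L\'evy--Khintchine formula \eqref{chexpdef}:
\[
r\psi(r^{-1/\alpha}w) = \mathbf{i}r^{1-1/\alpha}\gamma\cdot w - \frac{r^{1-2/\alpha}}{2}w\cdot\Sigma w + r\int_{\mathbb{R}^{m}\setminus\{0\}}\bigl(e^{\mathbf{i}r^{-1/\alpha}w\cdot x}-1-\mathbf{i}r^{-1/\alpha}w\cdot x\mathbf{1}_{\|x\|\leq 1}\bigr)\nu(\mathrm{d}x),
\]
and to analyse each summand as $r\downarrow 0$, always reducing the L\'evy integral to a dominated-convergence argument.

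For part (1) with $\alpha=2$, the drift contributes $O(r^{1/2})\to 0$, the Gaussian part already equals $-\tfrac{1}{2}w\cdot\Sigma w=\psi_{2}(w)$, and I would split the L\'evy integral at $\{\|x\|\leq 1\}$ versus $\{\|x\|>1\}$. On the inner piece the bound $|e^{\mathbf{i}u}-1-\mathbf{i}u|\leq u^{2}/2$ supplies the $r$-independent dominant $\|w\|^{2}\|x\|^{2}/2$, which is $\nu$-integrable on $\{\|x\|\leq 1\}$; the pointwise limit vanishes because $r(e^{\mathbf{i}u_{r}}-1-\mathbf{i}u_{r})=re^{\mathbf{i}u_{r}}-r-\mathbf{i}r^{1/2}w\cdot x\to 0$. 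On the outer piece $|r(e^{\mathbf{i}u}-1)|\leq 2r$ combined with $\nu(\{\|x\|>1\})<\infty$ gives a tail of size $O(r)$. For $1<\alpha<2$ we have $\Sigma=0$, the drift is of order $r^{(\alpha-1)/\alpha}\to 0$, and substituting $y=r^{-1/\alpha}x$ in the L\'evy integral recasts it as
\[
\int\bigl(e^{\mathbf{i}w\cdot y}-1-\mathbf{i}w\cdot y\mathbf{1}_{\|y\|\leq 1}\bigr)\tilde{\nu}_{r}(\mathrm{d}y)+\mathbf{i}w\cdot\int_{1<\|y\|\leq r^{-1/\alpha}}y\,\tilde{\nu}_{r}(\mathrm{d}y),
\]
with $\tilde{\nu}_{r}(B):=r\nu(r^{1/\alpha}B)$. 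Using the polar decomposition \eqref{eq:polardecomp}, the rescaled radial tail is $\tilde{\rho}_{u}^{r}(s,\infty)=r\rho_{u}(r^{1/\alpha}s,\infty)\to K(u)/s^{\alpha}$ by \eqref{eq:Assumptionalphastablepart1}, while \eqref{eq:eq:Assumptionalphastablepart2} provides a uniform-in-$r$ envelope. Dominated convergence in polar coordinates then identifies the limit as the characteristic exponent of the strictly $\alpha$-stable law whose L\'evy measure has spectral measure $\bar{\lambda}=K\lambda$, and after absorbing the drift correction produced by the change of variables, this rearranges into $\psi_{\alpha}$ as given in \eqref{chfnctstrictlystabe}.

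For part (2), applied with $\alpha=1$, the finite-variation assumption permits the rewriting $\psi(z)=\mathbf{i}\gamma_{0}\cdot z+\int(e^{\mathbf{i}z\cdot x}-1)\nu(\mathrm{d}x)$, so the drift alone produces $\mathbf{i}\gamma_{0}\cdot w$ after scaling. The residual integral satisfies $|r(e^{\mathbf{i}u}-1)|\leq \min(2r,\|w\|\|x\|)\leq 2(1\wedge\|w\|\|x\|)$, which is $\nu$-integrable by hypothesis and tends to $0$ pointwise, so a final application of dominated convergence finishes the argument.

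The chief obstacle lies in the $1<\alpha<2$ case: converting the pointwise tail convergence \eqref{eq:Assumptionalphastablepart1} of $\rho_{u}$ into convergence of the full characteristic exponent requires a careful dominated-convergence argument in polar coordinates that simultaneously controls the quadratic small-$\|y\|$ and bounded large-$\|y\|$ regimes, and the drift correction generated by the change of variables has to combine with $\gamma$ precisely so as to reproduce the specific normalisation of $\psi_{\alpha}$ prescribed by \eqref{chfnctstrictlystabe}.
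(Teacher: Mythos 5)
Your proposal is correct and follows essentially the same route as the paper: for $1<\alpha<2$ both arguments rescale the L\'evy measure to $\nu_{r}(\mathrm{d}x)=r\nu(r^{1/\alpha}\mathrm{d}x)$, pass to polar coordinates, use \eqref{eq:Assumptionalphastablepart1} for the pointwise tail limit and \eqref{eq:eq:Assumptionalphastablepart2} as the dominating envelope, with the drift vanishing at rate $r^{1-1/\alpha}$; the $\alpha=2$ and finite-variation cases are handled exactly as you describe. The one obstacle you flag — simultaneously controlling the quadratic small-$\|y\|$ and bounded large-$\|y\|$ regimes — is discharged in the paper by invoking Theorem 8.7 of \cite{Sato99}, which reduces the matter to testing $\nu_{r}$ against bounded continuous functions vanishing near the origin together with the truncated second-moment condition $\lim_{\epsilon\downarrow0}\limsup_{r\downarrow0}\int_{\|x\|\leq\epsilon}(z\cdot x)^{2}\nu_{r}(\mathrm{d}x)=0$, the latter following from \eqref{eq:eq:Assumptionalphastablepart2} and $\alpha<2$ exactly along the lines of your envelope bound.
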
 
\begin{proof}
	We will only concentrate on the case where \hyperref[stableattractassump]{Assumption ${\bf{A}}_\alpha$} is satisfied for some $1<\alpha<2$ (the other cases are well known). In this situation, we can write, for $\tau(x)=\mathbf{1}_{\| x\| \leq1}+(1/\| x\| )\mathbf{1}_{\| x\| >1}$,
	\[
	r\psi(r^{-1/\alpha}w)=\mathbf{i}\gamma_{r}\cdot w+\int_{\mathbb{R}^{m}\backslash\{0\}}(e^{\mathbf{i}w\cdot x}-1-\mathbf{i}w\cdot x\tau(x))\nu_{r}(\mathrm{d}x),
	\]
	where 
	\[\gamma_{r}=r^{1-1/\alpha}\gamma+\int_{1<\|x\|\leq r^{-1/\alpha}}\frac{x}{\|x\|}(1-\|x\|)\nu_{r}(\mathrm{d}x)+\int_{\|x\|>r^{-1/\alpha}}\frac{x}{\|x\|}\nu_{r}(\mathrm{d}x), \] 
	and $\nu_{r}(\mathrm{d}x) = r\nu(r^{1/\alpha}\mathrm{d}x)$. According to Theorem 8.7 in \cite{Sato99}, we are left to check that 
\begin{equation}
	\begin{aligned}\int_{\mathbb{R}^{m}}f(x)\nu_{r}(\mathrm{d}x) & =r\int_{\mathbb{S}^{m-1}}\int_{0}^{\infty}f(r^{-1/\alpha}su)\rho_{u}(\mathrm{d}s)\lambda(\mathrm{d}u)\\
		& \rightarrow\int_{\mathbb{S}^{m-1}}\int_{0}^{\infty}f(su)\frac{\mathrm{d}s}{s^{1+\alpha}}K(u)\lambda(\mathrm{d}u),
	\end{aligned}
	\label{eq:Satocond1}
\end{equation}
for every continuous and bounded function $f$ vanishing on a neighborhood of $0\in\mathbb{R}^{m}$, and that 
	\begin{equation}
	\gamma_{r}\rightarrow\frac{\int_{\mathbb{S}^{m-1}}uK(u)\lambda(\mathrm{d}u)}{(1-\alpha)};\,\,\lim_{\epsilon\downarrow0}\limsup_{r\downarrow0}\int_{\|x\|\leq\epsilon}(z\cdot x)^{2}\nu_{r}(\mathrm{d}x)=0.
		\label{eq:Satocond23}
	\end{equation}
	Set $\rho_{u,r}(\mathrm{d}s):=r\rho_{u}(r^{1/\alpha}\mathrm{d}s)$ and let $\zeta_{r,u}$ be a sequence of 1-dimensional ID distributions with characteristic triplet $(0,0,\rho_{u,r})$. Equation (\ref{eq:Assumptionalphastablepart1}) and Theorem 2 in \cite{IvanovJ18} imply that $\zeta_{r,u}$ converges to a  1-dimensional strictly $\alpha$-stable random variable with Lévy measure $\alpha K(u)\frac{\mathrm{d}s}{s^{1+\alpha}}\mathbf{1}_{s>0}$. Hence, for any function $g:\mathbb{R}\rightarrow\mathbb{R}$ continuous and bounded vanishing on a neighborhood of $0\in\mathbb{R}$, it holds that for $\lambda$-almost all $u\in\mathbb{S}^{m-1}$
	\begin{equation}
		\int_{0}^{\infty}g(s)\rho_{u,r}(\mathrm{d}s)\rightarrow\int_{0}^{\infty}g(s)\frac{\mathrm{d}s}{s^{1+\alpha}}K(u),
		\label{eq:convradiusLM}
	\end{equation}
	thanks to the 1-dimensional version of Theorem 8.7 in \cite{Sato99}. The convergence in (\ref{eq:Satocond1}) now follows by applying (\ref{eq:convradiusLM}) to the function $g(s)=f(su)$ along with (\ref{eq:eq:Assumptionalphastablepart2}) and the Dominated Convergence Theorem. On the other hand, from (\ref{eq:eq:Assumptionalphastablepart2}) and Tonelli's Theorem, we deduce that, for all $\epsilon>0$,
	\[
	\int_{\| x\| \leq\epsilon}(z\cdot x)^{2}\nu_{r}(\mathrm{d}x)\leq C\int_{0}^{\epsilon}\int_{\mathbb{S}^{m-1}}r\rho_{u}(r^{1/\alpha}y,+\infty)\lambda(\mathrm{d}u)y\mathrm{d}y\leq C\int_{0}^{\epsilon}y^{1-\alpha}\mathrm{d}y,
	\]
	from which the second part of  (\ref{eq:Satocond23}) follows trivially.	Similar arguments give us that
	\begin{align*}
	\gamma_{r}=&	-\int_{\mathbb{S}^{m-1}}u\left( \int_{1}^{r^{-1/\alpha}}r\rho_{u}((r^{1/\alpha}y,1])\mathrm{d}y\right) \lambda(\mathrm{d}u)+\mathrm{o}(1)\\
	&\rightarrow-\int_{\mathbb{S}^{m-1}}uK(u)\int_{1}^{\infty}\frac{\mathrm{d}y}{y^{\alpha}}\lambda(\mathrm{d}u),
	\end{align*}
	as required.\end{proof}
\begin{remark}\label{remarkapproxalphastable}
	Let $\Lambda_{r}^{\alpha}(\mathbf{T},f)$ be as in (\ref{eq:fddfunctionalsdef}) but $L$ is replaced by: 
	\begin{enumerate}
		\item A homogeneous Gaussian Lévy basis with covariance matrix $\Sigma$ if $\alpha=2$. 
		\item $L=\gamma_{0}\mathrm{Leb}$, when $\alpha=1$. 
		\item A strictly $\alpha$-stable homogeneous Gaussian Lévy basis if $1<\alpha<2$ with spectral measure $\bar{\lambda}(\mathrm{d}u) =\alpha K(u) \lambda(\mathrm{d}u) $.
	\end{enumerate}
	By arguing as in the proof of Lemma 5 in \cite{Sauri20}, we deduce from the previous result that 
	\[
	\Lambda_{r}(\mathbf{T},f)\overset{d}{=}\Lambda_{r}^{\alpha}(\mathbf{T},f)+\mathrm{o}_{\mathbb{P}}(r^{-1/\alpha}).
	\]
	A similar approximation is valid for $\Phi_{r}(\mathbf{T},f)$.
\end{remark}

\subsection{A useful identity \label{subsec:A-kernel-representation}}

Recall that $\mathfrak{D}\subseteq\mathbb{R}^{d}$ is a bounded Lipschitz domain whose boundary $M$ is a $(d-1)$-dimensional compact manifold. For every $t>0$, set
\[
g_{t}(s,x,n):=\int_{M}u_{M}(y)\mathbf{1}_{[h_{M}(n)s,+\infty)}(ty\cdot n)\mathcal{H}^{d-1}(\mathrm{d}y)\mathbf{1}_{0<s<t}.
\]
In the next result, we find a semi-explicit representation of $g_{t}$.

\begin{proposition} 
	Suppose that $M$ is of class $C^{2}$. Then, for $\mu^{\pm}_{M,A}$-a.a. $(s,x,n) \in \mathbb{R}^{+} \times \mathbb{R}^{d} \times \mathbb{S}^{d-1}$,
	\begin{equation}
		g_{t}(s,x,n)=n\mathcal{H}^{d-1}(\mathfrak{D}\cap\mathfrak{H}(h_{M}(n)s/t,n))\mathbf{1}_{0<s<t},\label{semiclsedformkernellimitYalpha-1}
	\end{equation}
	where $\mathfrak{H}(\ell,n) := \{y\in\mathbb{R}^{d}:y\cdot n=\ell\}$. 
\end{proposition}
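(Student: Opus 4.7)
The plan is to reduce the identity to an application of the classical Divergence Theorem with constant vector fields. With the change of variable $\ell := h_M(n)s/t$, the condition $ty \cdot n \geq h_M(n)s$ becomes $y \cdot n \geq \ell$, so that
\[
    g_t(s,x,n) = \int_{M \cap \{y \cdot n \geq \ell\}} u_M(y)\,\mathcal{H}^{d-1}(\mathrm{d}y)\,\mathbf{1}_{0<s<t}.
\]
In particular the right-hand side does not depend on $x$ at all, and the proposition becomes a purely geometric statement about $\mathfrak{D}$ intersected with the half-space $\{y\cdot n \geq \ell\}$.

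The core of the argument goes as follows. When $\ell \in (-h_M(-n), h_M(n))$ is a regular value of $y \mapsto y\cdot n$ restricted to $M$ (a condition satisfied for a.e.\ $\ell$ by Sard's theorem, since $M$ is $C^2$), the set $\mathfrak{D}_\ell := \mathfrak{D} \cap \{y : y\cdot n > \ell\}$ is a bounded Lipschitz domain whose boundary decomposes, up to an $\mathcal{H}^{d-1}$-null set, into the ``cap'' $M \cap \{y\cdot n > \ell\}$ with outward unit normal $u_M$, and the cross-section $\mathfrak{D} \cap \mathfrak{H}(\ell,n)$ with outward unit normal $-n$. Applying the Divergence Theorem componentwise to $\mathfrak{D}_\ell$ with the constant vector fields $\mathbf{e}_i$, $i=1,\ldots,d$, and using $\nabla\cdot\mathbf{e}_i = 0$, one obtains
\[
    \int_{M \cap \{y \cdot n > \ell\}} (\mathbf{e}_i \cdot u_M(y))\,\mathcal{H}^{d-1}(\mathrm{d}y) = (\mathbf{e}_i \cdot n)\,\mathcal{H}^{d-1}(\mathfrak{D} \cap \mathfrak{H}(\ell,n)).
\]
Assembling the $d$ coordinate equations yields the claimed vector identity, after undoing the change of variable.

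It remains to handle the exceptional ranges of $\ell$ and to translate the $\ell$-almost-everywhere statement into a $\mu^{\pm}_{M,A}$-almost-everywhere one. When $\ell \geq h_M(n)$ (equivalently $s \geq t$), both sides vanish by the factor $\mathbf{1}_{0<s<t}$ together with the fact that the cap and the cross-section are empty; when $\ell \leq -h_M(-n)$, the cross-section is empty and the indicator is identically one on $M$, so the left-hand side equals $\int_M u_M\,d\mathcal{H}^{d-1}$, which vanishes by applying the Divergence Theorem to $\mathfrak{D}$ itself. Finally, Tonelli's theorem applied to the product structure of $\mu^{\pm}_{M,A}$ in the variables $(s,x)$ shows that the Lebesgue-null set of non-regular values of $\ell$ lifts to a $\mu^{\pm}_{M,A}$-null set of triples. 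The main technical point---and the only real obstacle---is verifying the Lipschitz regularity of $\partial\mathfrak{D}_\ell$ needed to apply the Divergence Theorem, which is exactly where the $C^2$ hypothesis on $M$, together with the regular-value condition on $\ell$, is invoked.
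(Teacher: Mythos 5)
Your route is genuinely different from the paper's: you apply the Divergence Theorem directly to the truncated domain $\mathfrak{D}_\ell=\mathfrak{D}\cap\{y\cdot n>\ell\}$, whereas the paper avoids the corner along $M\cap\mathfrak{H}(\ell,n)$ altogether by replacing $M$ with a $C^{2}$ manifold $M_{\varepsilon}$ that coincides with $M$ above level $\ell$ and is capped off by the flat cross-section at level $\ell-\varepsilon$, applying the smooth Divergence Theorem there, and then letting $\varepsilon\downarrow 0$ (the error is controlled because $\mathcal{H}^{d-1}(M\cap\{\ell-\varepsilon\le y\cdot n\le \ell\})\to 0$, and the cross-sectional areas converge by a generalized dominated convergence argument using a bounding ball). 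The only measure-theoretic input the paper needs is that for a.e.\ level the slice $M\cap\mathfrak{H}(\ell,n)$ has finite $\mathcal{H}^{d-2}$ measure, hence is $\mathcal{H}^{d-1}$-null; this comes from the slicing theorem (Mattila, Theorem 10.10) and lifts to a $\mu^{\pm}_{M,A}$-null set by Tonelli, exactly as in your last step. Your treatment of the degenerate ranges of $\ell$ and of the sign of the normal on the cross-section is correct.

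The gap is your appeal to Sard's theorem. For the height function $y\mapsto y\cdot n$ restricted to the $(d-1)$-dimensional manifold $M$, the classical Sard theorem requires $C^{k}$ regularity with $k\ge d-1$; under the standing hypothesis that $M$ is only $C^{2}$, the claim that a.e.\ $\ell$ is a regular value is unjustified once $d\ge 4$ (Whitney-type examples show the regularity threshold is sharp in general). Moreover, even granting a regular value, you leave the Lipschitz regularity of $\partial\mathfrak{D}_\ell$ at the corner as an asserted but unverified step. Both issues disappear if you replace the classical Divergence Theorem by the Gauss--Green formula for sets of finite perimeter: $\mathfrak{D}_\ell$ is the intersection of a bounded Lipschitz domain with a half-space, hence of finite perimeter, and its reduced boundary coincides, up to an $\mathcal{H}^{d-1}$-null set, with $(M\cap\{y\cdot n>\ell\})\cup(\mathfrak{D}\cap\mathfrak{H}(\ell,n))$ as soon as $\mathcal{H}^{d-1}(M\cap\mathfrak{H}(\ell,n))=0$ --- which is exactly the a.e.\ condition supplied by the slicing theorem, with no further regularity of the level required. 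With that substitution (or by reverting to the paper's approximation argument) your proof closes for all $d\ge 2$.
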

\begin{proof} 
	Set
	\begin{align*}
		N & = \{(s,x,n): \mathcal{H}^{d-2} (M\cap\mathfrak{H}(h_{M}(n)s/t,n))=+\infty,s\geq0\}\\
		& =\{(s,x,n):\mathcal{H}^{d-2}(M\cap\mathfrak{H}(h_{M}(n)s/t,n))=+\infty,0\leq s\leq t\},
	\end{align*}
	where the second identity follows by the definition of the support function. By Tonelli's Theorem and Theorem 10.10 in \citet{Mattila99}, $N$ is $\mu^{\pm}_{M,A}$-null set. Let us now verify that (\ref{semiclsedformkernellimitYalpha-1}) is valid for every $(s,x,n)\in N^{c}$ such that $h_{M}(n)>0$ and $t>s>0$. For such a triplet $(s,x,n)$, we have that $h_{M}(n)>\ell:=h_{M}(n)s/t>0$ and
	\begin{equation}
		\mathcal{H}^{d-1}(M\cap\mathfrak{H}(\ell,n))=0.\label{nullhyperplane}
	\end{equation}
	For every $\ell>\varepsilon>0$, let $M_{\varepsilon}$ be a $(d-1)$-dimensional compact manifold of class $C^{2}$ contained in $\{y\in\mathbb{R}^{d}:\ell-\varepsilon\leq y\cdot n\leq h_{M}(n)\}$ such that 
	\[
	M_{\varepsilon}\cap\mathfrak{H}(\ell-\varepsilon,n)=\mathfrak{D}\cap\mathfrak{H}(\ell-\varepsilon,n),
	\]
	and 
	\[
	M_{\varepsilon}\cap\{y\in\mathbb{R}^{d}:\ell\leq y\cdot n\leq h_{M}(n)\}=M\cap\{y\in\mathbb{R}^{d}:\ell\leq y\cdot n\leq h_{M}(n)\}.
	\]
	By construction, the outward vector of $M_{\varepsilon}$ satisfies that 
	\[
	u_{M_{\varepsilon}}(y)=
	\begin{cases} 
		-n & \text{if }y\cdot n=\ell-\varepsilon\\
		u_{M}(y) & \text{if }\ell \leq y \cdot n\leq h_{M}(n)
	\end{cases}.
	\]
	Thus, by the Divergence Theorem and (\ref{nullhyperplane}), we deduce that 
	\begin{equation}
		g_{t}(s,x,n) = n\mathcal{H}^{d-1}(\mathfrak{D} \cap \mathfrak{H} (\ell-\varepsilon,n)) + \mathrm{o}(\varepsilon).
		\label{kernelapprox}
	\end{equation}
	In view that $\mathfrak{D}$ is compact, there is a ball $B$ with radius $\rho>h_{M}(n)>0$ such that 
	\[
	\mathfrak{D}\cap\mathfrak{H}(\ell-\varepsilon,n)\subseteq B\cap\mathfrak{H}(\ell-\varepsilon,n).
	\]
	For $\varepsilon$ small enough, $B\cap\mathfrak{H}(\ell-\varepsilon,n)$ is a $(d-1)$-dimensional ball embedded on $\mathfrak{H}(\ell-\varepsilon,n)$ with radius $\sqrt{\rho^{2}-(\ell-\varepsilon)^{2}}$. The preceding observation allows us to apply the Generalized Dominated Convergence Theorem in (\ref{kernelapprox}) to conclude that (\ref{semiclsedformkernellimitYalpha-1}) is indeed valid. 
\end{proof} 
\bibliography{bibSept17}

\end{document}